    \pgfplotsset{compat=1.18}
\DeclareMathOperator{\dive}{div}
\DeclareMathOperator{\grad}{\nabla}
\newcommand{\sumN}{\sum_{i=0}^N}
\newcommand{\sumNoverlap}{\sum_{i=0}^N}
\newcommand{\sumEij}{\sum_{E_{ij}}}
\newcommand{\jump}[1]{\llbracket #1 \rrbracket}
\newcommand{\avgdomainu}[1]{\bar u_{\Omega_{#1}}}
\newcommand{\Eij}{E_{ij}}
\newcommand{\edgeij}{\mathcal E^{ij} }
\newcommand{\baseedge}[1]{\vartheta_{E_{#1}}}
\newcommand{\avgedgeu}[1]{\bar u_{E_{#1}}}
\newcommand{\vertexl}{\mathcal V^l }
\newcommand{\oververtexl}{\overline{\mathcal V}^l}
\newcommand{\undervertexl}{\underline{\mathcal{V}}^l}
\newcommand{\basevertexl}{\vartheta^l}
\newcommand{\overbasevertexl}{\bar{\vartheta}^l}
\newcommand{\underbasevertexl}{\underline{\vartheta}^l}
\newcommand{\R}{\mathbb R}
\newcommand{\Hnorm}[2]{\| #1 \|_{H^1(\Omega_{#2})}}
\newcommand{\Hseminorm}[2]{| #1 |_{H^1(\Omega_{#2})}}
\newtheorem{remark}{Remark}
\newtheorem{theorem}{Theorem}
\newtheorem{lemma}{Lemma}
\title{GDSW preconditioners for composite Discontinuous Galerkin discretizations of multicompartment 
reaction-diffusion problems}
\author{
	Ngoc Mai Monica Huynh\thanks{
	    Dipartimento di Matematica, Universit\`a degli Studi di Pavia, 
		Via Ferrata, 27100 Pavia, Italy.
		E-mail: {\sf ngocmaimonica.huynh@unipv.it},
                {\sf luca.pavarino@unipv.it}.
		These authors were supported by grants of Istituto Nazionale di Alta Matematica (INDAM-GNCS), by the European High-Performance Computing Joint Undertaking EuroHPC under grant agreement No 955495 (MICROCARD), and by MUR (PRIN 202232A8AN\_002 and PRIN P2022B38NR\_001)
   funded by European Union - Next Generation EU.
		}
	\and
	Luca Franco Pavarino\footnotemark[1]
	\and
	Simone Scacchi\thanks{
	    Dipartimento di Matematica, Universit\`a degli Studi di Milano,
		Via Saldini 50, 20133 Milano, Italy.
		E-mail: {\sf simone.scacchi@unimi.it}.
		This author was supported by grants of Istituto Nazionale di Alta Matematica (INDAM-GNCS), by the European High-Performance Computing Joint Undertaking EuroHPC under grant agreement No 955495 (MICROCARD, and  by MUR (PRIN 202232A8AN\_003 and PRIN P2022B38NR\_002)
   funded by European Union - Next Generation EU.
		}
}
\begin{document}

\maketitle

\begin{abstract}
The aim of the present work is to design, analyze theoretically, and test numerically, a generalized Dryja-Smith-Widlund (GDSW) preconditioner for composite Discontinuous Galerkin discretizations of multicompartment parabolic reaction-diffusion equations, where the solution can exhibit natural discontinuities across the domain. We prove that the resulting preconditioned operator for the solution of the discrete system arising at each time step converges with a scalable and quasi-optimal upper bound for the condition number. 
The GDSW preconditioner is then applied to the EMI (Extracellular - Membrane - Intracellular) reaction-diffusion system, recently proposed to  model microscopically the spatiotemporal  evolution of cardiac bioelectrical potentials.
Numerical tests validate the scalability and quasi-optimality of the EMI-GDSW preconditioner, and investigate its robustness with respect to the time step size as well as jumps in the diffusion coefficients.  
\end{abstract}

\pagestyle{myheadings}
\markboth{N. M. M. Huynh, L. F. Pavarino, S. Scacchi}{GDSW preconditioners for composite DG discretizations of parabolic problems}

\section{Introduction}
In the present work, we construct and analyze a generalized Dryja-Smith-Widlund preconditioner for parabolic reaction-diffusion problems where the equations present a low-order term that can lead to discontinuities in the solution on the considered domain.
Such problems arises in many applications, such as microscopic modeling in biomechanics, porous media \cite{corti2023numerical, piersanti2021parameter}, and cardiac and excitable tissue modeling \cite{tveito2021tris, tveito2017, tveito2021}, where the low-order term either represents a 3D-1D (or 3D-0D) poroelasticity network or couples diffusive phenomena with microscopic ionic exchanges among cells. 
The numerical simulation of these phenomena very often presents challenges due to the different physics and scales involved, which require spatial and time discretizations that lead to matrix problems with very large dimensions.
Common choices for the space discretization are based on Discontinuous Galerkin (DG) methods (see \cite{corti2023discontinuous, corti2023numerical}), which usually need to be coupled with efficient preconditioned solvers in order to tackle their computational complexity.

In the context of DG discretizations, several solvers have been proposed, ranging from block-preconditioners \cite{kanschat2003preconditioning} (which can be tailored to each specific physics in case of coupled problems) to two-level methods for second order elliptic problems \cite{dobrev2006two}, as well as iterative and multilevel methods for elliptic equations with jumps coefficients \cite{ayuso2014multilevel, ayuso2009uniformly}.
An extensive study on the parallel performance of algebraic multigrid solvers on Graphics Processing Units (GPUs) can be found in \cite{centofanti2024multigrid},
In the Domain Decomposition (DD) framework, numerous studies have addressed both overlapping \cite{antonietti2007schwarz, antonietti2008multiplicative} and non-overlapping \cite{canuto2014bddc, dryja2013analysis, dryja2015deluxe} DD methods for DG discretizations of elliptic problems.
However, these works did not consider the specific challenges of reaction-diffusion problems with discontinuous solutions which are the focus of this work, where the discrete solutions must preserve the discontinuities of the continuous problem. 
Among the few works addressing this issue, we mention the
two-level algebraic multigrid methods \cite{budisa2023algebraic} and our previous work on dual-primal preconditioner, namely the Balancing Domain Decomposition by Constraints (BDDC) \cite{huynh2023convergence}. 
While these BDDC preconditioners have been proven to be mathematically solid and to perform well on structured meshes or whenever the local connectivity graphs of the nodes are simple or well-known \cite{chegini2023efficient, huynh2023convergence}, their implementation is not straightforward and high-performance libraries, such as PETSc \cite{balay2022petsc}, typically require specific matrix and vector structures, which are not easily accessible to intermediate users. Moreover, the reduced linear system to which the algebraic problem is recast is typically quite dense, which can pose challenges when simulating examples with computational domains obtained through image segmentation, where the number of degrees of freedom is high and their distribution complex.

In order to overcome this problem, we consider here a DD method which presents the advantage of a lighter non-overlapping coarse problem combined with the power and simplicity of overlapping Schwarz algorithms as local solvers. This method, known in the literature as generalized Dryja-Smith-Widlund (GDSW) preconditioner \cite{dohrmann2008domain}, has been largely studied and a parallel implementation can be found in \cite{heinlein2020frosch}. Several variants have been proposed by improving the construction of the coarse space \cite{heinlein2019adaptive, heinlein2022adaptive} and by multilevel extensions \cite{heinlein2023multilevel}.

In this paper, we extend the GDSW preconditioner to composite Discontinuous Galerkin
discretizations of multicompartment reaction-diffusion problems.
Since the abstract problem can be formulated as collection of many subproblems that interacts only on the interfaces of the related domain, there is a natural overlap between the construction of the GDSW subspaces and the subproblems definition, as we will show in the next Section.
The main result of this paper is the proof of a scalable and quasi-optimal upper bound for the condition number of the GDSW preconditioned operator for composite DG discretizations of reaction-diffusion problems. 
The GDSW preconditioner is then applied to the recently proposed EMI (Extracellular - Membrane - Intracellular) reaction-diffusion system, modeling microscopically the spatiotemporal  evolution of cardiac bioelectrical potentials.

The work is structured as follows: we start presenting a generalized problem formulation in Section \ref{sec: model}, along with a possible time discretization of its variational formulation. Section \ref{sec: preconditioner} introduces the standard GDSW preconditioner and the related coarse and local spaces. In particular, we propose a suitable formulation of the space subdivision.
The main convergence result is proven in Section \ref{sec: theoretical convergence} and confirmed numerically through extensive two-dimensional tests in Section \ref{sec: numerical results}. 
Several comments on further developments are given in the conclusive Section \ref{sec: conclusions}.

\section{A multicompartment parabolic reaction-diffusion problem}\label{sec: model}
{\bf The continuous model problem.} Given $N+1$ generic domains $\Omega_i \subset \mathbb R^d$ with $i=0,\dots,N$ and $d=2,3$, a time interval (0,T), such that the union of all the $\{\Omega_i\}_{i=0}^N$ domains forms a global domain $\Omega$ and such that $\Omega_i \cap \Omega_j = \emptyset$, with $i\neq j$, let us consider the following multicompartment parabolic reaction-diffusion problem: find $u=\{u_i\}_{i=0}^N$ such that it holds
\begin{equation} \label{eq: multiple parabolic pde}
    \begin{cases}
        - \dive (\sigma_i \grad u_i) = 0		&\qquad \text{in }\Omega_i, \ \ i=0,\dots, N,	\\
        - n_i^T \sigma_i \grad u_i = C_m \dfrac{\partial \jump{u}_{ij} }{\partial t} + F(\jump{u}_{ij})			&\qquad \text{on } \Eij = \overline{\Omega}_i \cap \overline{\Omega}_j \subset \partial \Omega_i, \; i\ne j,		\\
        n^T\sigma_i \grad u_i  = 0  & \qquad \text{on $\partial\Omega_i \cap \partial\Omega$}, \\
        u_i(0)=u_{i,0} &\qquad \text{in }\Omega_i, \ \ i=0,\dots, N.	\\
    \end{cases}
\end{equation}
Here, $F$ is the reaction term and  $\jump{u}_{ij} = u_i \cdot n_i - u_j \cdot n_j$ the jump between the value of $u_i$ and its neighboring value $u_j$ from the adjacent subdomain $\Omega_j$ along the common boundary $\Eij$, since $u_i$ is supposed to be discontinuous across the domains.

From an application viewpoint, this system could be seen as the union of (loosely) coupled problems: for instance, it can model the co-existence and interaction of different pollution agents in different regions over a time period; or it can represent the propagation of the electric potentials $u_i$ at a cellular scale and the time evolution of the coupling of several cellular dynamics within any organ tissue (being this the brain, the cardiac muscle, etc).

\begin{remark}
    We consider $N+1$ domains (or problems) since in the numerical tests we will consider a particular application case where we will benefit from this notation. Moreover, since the purpose of this work is to study the convergence rate of a preconditioner for a composite Discontinuous Galerkin discretization of (\ref{eq: multiple parabolic pde}), we consider the simplified two-dimensional setting of our model problem ($d=2$), leaving the three-dimensional case for further numerical studies.
\end{remark}

The weak formulation for the coupled problem on $\Omega$, by summing all the contributions from the $N+1$ domains, reads: 
given the initial data $u_{i,0}=u_i(0)$ for $i=0,\dots N$, find $u = \{ u_i \}_{i=0}^N$, $u_i \in L^2(0,T; H^1(\Omega_i))$, such that
\begin{equation}
\sumN \int_{\Omega_i} \sigma_i \grad u_i \grad \phi_i \, dx 	+ \dfrac{1}{2} \sumN \sumEij \int_{\Eij} \left( C_m \dfrac{\partial \jump{u}_{ij} }{\partial t} + F(\jump{u}_{ij}) \right) \jump{\phi}_{ij} \, ds = 0,
\label{weak_form}
\end{equation}
for suitable test functions $\phi_i \in H^1(\Omega_i)$, $i=0,\dots,N$.

{\bf Semidiscretization in time.} In order to ease the subsequent analysis, we consider an implicit-explicit (IMEX) time discretization, where the diffusion term is treated implicitly and the reaction explicitly. Alternative implicit time discretizations could be considered as well, analogously to the Bidomain implicit discretizations studies in \cite{huynh2022parallel,huynh2021newton}.

The time interval $(0,T)$ is subdivided into $K$ intervals and, by defining the time step $\tau = t^{k+1} - t^k$, the following scheme arises: for $k=0,\dots,K$, find 
$\{u^{k+1}_i\}_{i=0}^N$, with $u_i^{k+1} \in H^1(\Omega_i)$ such that  $\forall \phi_i \in H^1(\Omega_i)$, $i=0, \dots,N$

	\begin{equation*}
		\dfrac{1}{2} \sumN \sumEij \int_{\Eij} C_m \dfrac{\jump{u^{k+1}}_{ij} - \jump{u^k}_{ij} }{\tau} \jump{\phi}_{ij} \, ds \, + \sumN \int_{\Omega_i} \sigma_i \grad u_i^{k+1} \grad \phi_i \, dx 
		= -\dfrac{1}{2} \sumN \sumEij \int_{\Eij} F(\jump{u^k}_{ij}) \jump{\phi}_{ij} \, ds ,
	\end{equation*}
 or, equivalently, 
	\begin{multline*}
		\dfrac{1}{2} \sumN \sumEij \int_{\Eij} C_m \jump{u^{k+1}}_{ij} \jump{\phi}_{ij} \, ds + \tau \sumN \int_{\Omega_i} \sigma_i \grad u_i^{k+1} \grad \phi_i \, dx \\
		= \dfrac{1}{2} \sumN \sumEij \int_{\Eij} C_m \jump{u^{k}}_{ij} \jump{\phi}_{ij} \, ds - \tau \dfrac{1}{2} \sumN \sumEij \int_{\Eij} F(\jump{u^k}_{ij}) \jump{\phi}_{ij} \, ds.
	\end{multline*}
Let the above quantities be recast as follows:
	\begin{equation} \label{eq: a_i p_i}
		\begin{split}
			a_i (u, \phi) &:= \int_{\Omega_i} \sigma_i \grad u_i \grad \phi_i \, dx					\\
		      p_i (u, \phi) &:= \dfrac{1}{2} \sumEij \int_{\Eij} C_m \jump{u}_{ij} \jump{\phi}_{ij} \, ds	\\
		      f_i (\phi) &:= \dfrac{1}{2} \sumEij \int_{\Eij} \left( C_m \jump{u}_{ij} \jump{\phi}_{ij} - \tau F(\jump{u}_{ij}) \jump{\phi}_{ij} \right) \, ds 
		\end{split}
	\end{equation}
	\begin{equation} \label{eq: widetilde d_i}
        \widetilde d_i (u, \phi) := \tau a_i(u, \phi) + p_i(u, \phi).
	\end{equation}
 The semidiscrete problem at each time step can then be written in compact form as: find $u=\{u^{k+1}_i\}_{i=0}^N$, with $u_i^{k+1} \in H^1(\Omega_i)$ such that 
 
 \begin{equation}
     d(u, \phi) := \sumN \widetilde d_i (u, \phi) = f(\phi) := \sumN f_i(\phi) ,
     \quad \forall \phi=\{\phi_i\}_{i=0}^N, \quad \phi_i \in H^1(\Omega_i).
     \label{semidiscrete_pb}
 \end{equation}

{\bf The discrete problem.}  On each subdomain  $\Omega_i$, we consider a standard local finite element space $V^h(\Omega_i)$, and we denote by $V^h(\Omega)$ the product space of the $V^h(\Omega_i)$.
The fully discrete problem at each time step is then given by: find $u=\{u_i\}_{i=0}^{N} \in V^h(\Omega)$ such that 
\begin{equation} \label{eq:glob sys}
	d(u, \varphi) = f(\varphi),
		\qquad \forall\varphi=\{\varphi_i\}_{i=0}^{N} \in V^h(\Omega).
\end{equation}
Denoting by $A_i$ the local stiffness matrix associated with the bilinear form $a_i(\cdot, \cdot)$ and by $M_i$ the local mass matrix associated with the bilinear form $p_i(\cdot, \cdot)$, then (\ref{eq:glob sys}) can be written 
in matrix form as
	\begin{equation} \label{eq: global algebraic system}
	    \mathcal K \mathbf u = \mathbf f,
	    \qquad
	    \text{with } 
	    \qquad \mathcal K = \sum_{i=0}^N \mathcal K_i,
	    \qquad
	    \mathcal K_i = \tau A_i + M_i.
	\end{equation}
The right-hand side of the above system is updated at each time step, and the resulting linear system is solved by the Preconditioned Conjugate Gradient (PCG) method. We will design and analyze an efficient and scalable preconditioner in the next section.

\section{The GDSW preconditioner} \label{sec: preconditioner}
In the following, we consider and study the theoretical convergence of an overlapping Schwarz preconditioner, known in the literature as the generalized Dryja-Smith-Widlund (GDSW) preconditioner, modified to comply with the structure of our multicomparment reaction-diffusion problem (\ref{eq: multiple parabolic pde}), where the solution on $\Omega$ is allowed to be discontinuous across the different subdomains $\Omega_i$.
The idea behind the GDSW algorithm is to combine a coarse space based on iterative substructuring techniques with local components based on classical overlapping Schwarz techniques \cite{dohrmann2008domain}. \\ 

We recall that the global domain $\Omega$ is the union of non-overlapping (sub)domains $\Omega_i$, $i = 0, \dots, N$, and we have a finite element discretization on each subdomain. Our problem formulation allows also for non-matching discretizations across the subdomains (since the solution $u=\{u_i\}_{i=0}^N$ can be discontinuous), but for simplicity we analyze the case with matching nodes on the interface $\Gamma$ between the subdomains. 
In a two-dimensional situation, this interface is composed of edges $\edgeij$ and vertices $\vertexl$: the first are open subset of the interface $\Gamma$, containing all the degrees of freedom shared by two subdomains $\Omega_i$ and $\Omega_j$. The latter are endpoints of such edges and are usually shared by more than two subdomains.
In three-dimensions, the interface would consist of faces (two-dimensional entities shared by two subdomains), edges (one-dimensional entities shared by more than two subdomains) and vertices (endpoints of edges).

Let us then construct an overlapping subdomain partition $\{ \Omega_j' \}_j$ of $\Omega$, for $j=0,\dots,N$, where each $\Omega_j$ is extended by a strip of depth $\delta > 0$ (called overlap) of finite elements. 
Let us denote by $\Gamma_{j, \delta}$ this strip. Then $\Omega_j' = \Omega_j \cup \Gamma_{j,\delta}$. 
In the following and in the numerical tests, we will consider only the case of minimal overlap, i.e. $\delta = h$ (see Fig. \ref{fig:overlap} on the left).

\begin{figure}[!ht]
    \centering
    \includegraphics[scale=.5]{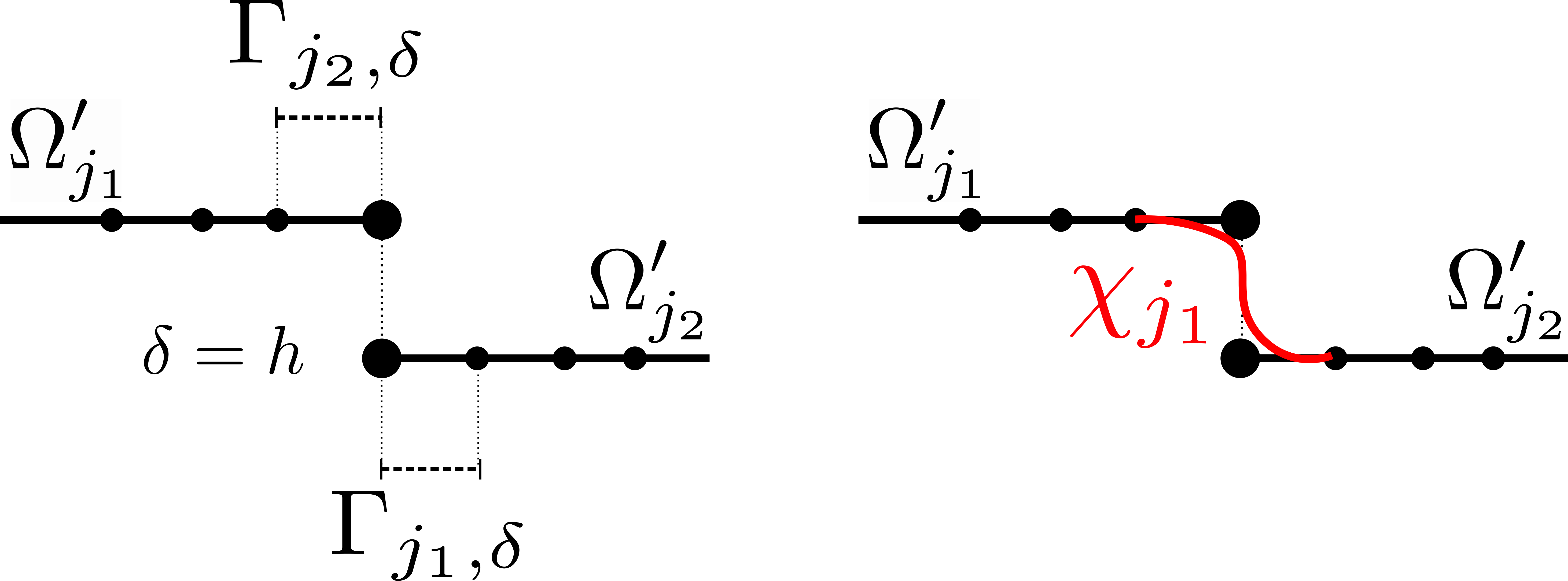}
    \caption{Overlap between two overlapping subdomains $\Omega'_{j_1}$ and $\Omega'_{j_2}$ for the multicompartment problem described in system (\ref{eq: multiple parabolic pde}), 1-dimensional example. On the left, representation of the current minimal overlapping situation. On the right, the considered partition of unity basis function $\chi_{j_1}$ for subdomain $\Omega'_{j_1}$.}
    \label{fig:overlap}
\end{figure}



\subsection{GDSW coarse space} \label{subsec: coarse space}
The coarse space $V_0^C$ is chosen as the space spanned by the vertex and edges functions, described below, extended as discrete harmonic functions inside each subdomain $\Omega_i$. These functions provide a partition of unity on the interface $\Gamma$. For each vertex $\vertexl$ we have as many vertex functions as the number of subdomains that share that vertex.
Denote by $\mathcal V^0_l$ the set of indices $k$ of subdomain $\Omega_k$ that shares vertex $\vertexl$, for all $l = 1, \dots, N_\text{vertices}$ (see Fig. \ref{fig:nonoverlap vertex}, left). Then the vertex functions associated with vertex $\mathcal V^l$ are given by
\begin{equation*}
    \basevertexl(x) = \{ \basevertexl_k(x) \}_{k \in \mathcal V^0_l},
\end{equation*}
where $\basevertexl_k$ is the basis function associated to $\mathcal V^l$ that has support in $\Omega_k$.
The edge functions $\baseedge{ij}(x)$ (represented in Fig. \ref{fig:nonoverlap vertex}, right) are instead unique for each edge $\Eij$.
\begin{figure}[!ht]
    \centering
    \includegraphics[scale=0.4]{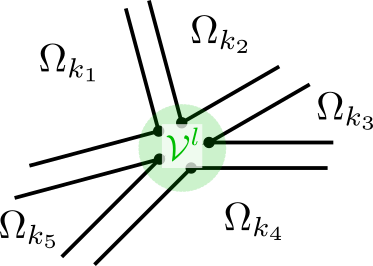}
    \hspace{20mm}
    \includegraphics[scale=0.4]{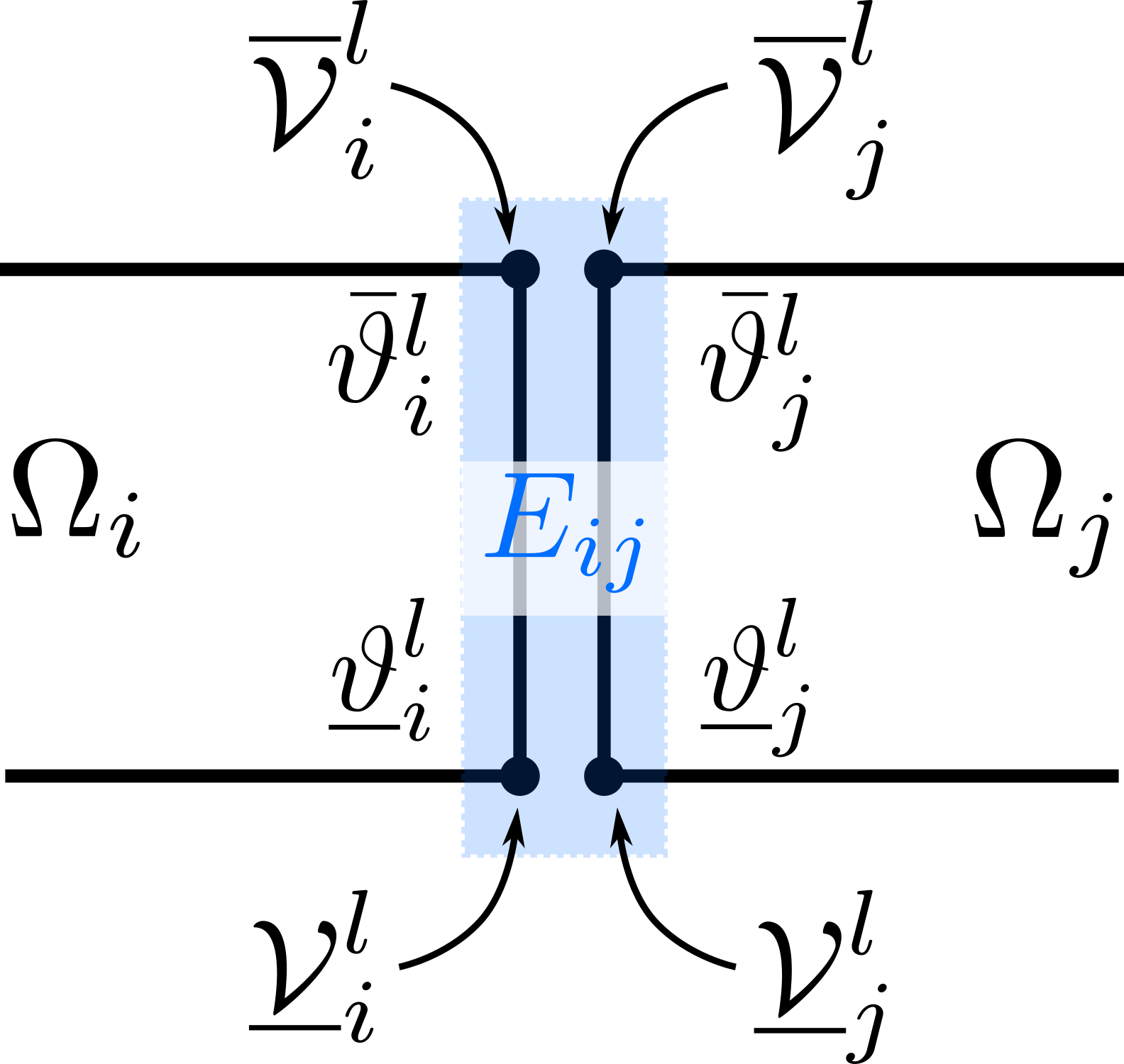}
    \caption{Vertex and edge sharing, 2-dimensional example. On the left, the vertex $\mathcal V^l$ is shared by five non-overlapping subdomains, therefore the set $\mathcal V^0_l$ contains five indices $\mathcal V^0_l = \{ k_1, k_2, k_3, k_4, k_5 \}$. On the right, the vertices $\undervertexl_i$, $\undervertexl_j$ and $\oververtexl_i$, $\oververtexl_j$ represent the same geometric endpoints of the edge $\Eij \subset \partial \Omega_i$, but are referred to different subdomains.}
    \label{fig:nonoverlap vertex}
\end{figure}

\noindent
Thus, the coarse component $u_0 \in V_0^C$ can be written as
\begin{equation}\label{eq: coarse decomposition}
    u_0 = \sum_{\vertexl} \sum_{k \in \mathcal V^0_l} u_k(\vertexl) \, \basevertexl(x) + \sumEij \avgedgeu{ij} \, \baseedge{ij} (x),
\end{equation}
where $\avgedgeu{ij}$ is the average of $u$ over the edge $\Eij$. The projection $P_0$ onto the coarse space $V_0^C$ is defined as
\begin{equation*}
    d(P_0 u, v) = d(u,v),
    \qquad
    \forall v \in V_0^C.
\end{equation*}

\subsection{GDSW local spaces} \label{subsec: local spaces}
We recall that the local non-overlapping finite element spaces have been denoted by $V_i = V^h (\Omega_i)$, while the product space can be trivially written as $V = V_0 \times \dots \times V_N$.
We then define the local space
\begin{equation*}
    V_i' = \{ v \in V : \, v = 0 \quad \text{in } \Omega \backslash \Omega_i' \} .
\end{equation*}
This space can also be written as
\begin{equation*}
    V_i' = V_i \times \{ \phi_j : \ \phi_j \neq 0 \text{ on } \partial \Omega_i, \, \phi_j \notin V_i \} .
\end{equation*}
Since $V_i' \subset V$, any function $u'_i \in V_i'$ can be written as 
$
     u_i' = \{ u'_{i,j} \}_{j=0}^N. 
$
Let $\chi_i$, $i=0,\dots, N$ be partition of unity as defined in \cite{dohrmann2008domain}.
Any function $u \in V$ can be written as 
\begin{equation*}
    u = u_0 + \sumNoverlap u_i',
    \qquad
    u_0 \in V_0^C, 
    \quad
    u_i' \in V_i', \text{ for } i = 1, \dots, N.
\end{equation*}
where $u_0 \in V_0^C$ has been defined in (\ref{eq: coarse decomposition}) and $u_i' \in V_i'$ are given by $u_i' = I^h (\chi_i (u - u_0) ) \in V_i' \subset V$, where $I^h$ interpolates into the product space $V^h(\Omega)$.
The projection-like operators $P_i$ onto the local space $V_i'$ are defined as $P_i = I_i\tilde{P}_i$, where
\begin{equation*}
    d(P_i u, v_i) = d(u, v_i),
    \qquad
    \forall v_i \in V_i'.
\end{equation*}
Our GDSW preconditioned operator is then defined as
\begin{equation}
    P_{ad} = P_0 + \sum_{i=1}^NP_i.
    \label{P_ad}
\end{equation}

\begin{remark}(Technical assumptions)
We will consider domains with the same geometrical properties as in Reference \cite{dohrmann2008domain}, in order to consider the same technical results stated in that work. The numerical tests presented in Section \ref{sec: numerical results} consider simpler and more trivial domains, and in that case some of the assumptions can be dropped and the convergence result may be sharpen.
Nevertheless, to enhance readability, we will not include here these technical tools, but we will refer to the specific Lemmas whenever required.	
\end{remark}

We prove here an auxiliary result that will be used in the proof of convergence rate bound; this is an adaptation of the bound (4.3) from \cite{dohrmann2008domain}.

\begin{lemma}\label{lemma 4.3}
    Let $u_0$ be the GDSW coarse function defined in (\ref{eq: coarse decomposition}). Then, for the multicompartment problem defined in (\ref{eq:glob sys}), it holds
    \begin{equation*}
        \| u - u_0 \|^2_{L^2(\Omega_i)} \leq C \left( 1 + \log \dfrac{H}{h} \right)^2 H^2 \Hseminorm{u}{i}^2,
    \end{equation*}
    with $H$ and $h$ the maximum subdomain and finite element diameters, respectively.
\end{lemma}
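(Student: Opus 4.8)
The plan is to localize everything to the single subdomain $\Omega_i$ and to observe that the multicompartment structure is essentially invisible to this estimate: all norms are taken over $\Omega_i$, where $u=u_i$, so the discontinuities of $u$ across neighbouring subdomains never enter, and the restriction of the coarse function $u_0$ to $\Omega_i$ is an ordinary GDSW interpolant built from the vertex values $u_i(\vertexl)$ and the edge averages $\avgedgeu{ij}$ of $u_i$. The only formal difference with respect to \cite{dohrmann2008domain} is that each vertex $\vertexl$ now carries one vertex function per sharing subdomain; restricted to $\Omega_i$ only the function supported in $\Omega_i$ survives, so the argument collapses to the single-domain case treated there.

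First I would use the partition-of-unity property of the vertex and edge functions on $\partial\Omega_i$: since they reproduce constants and a constant is its own discrete-harmonic extension, the interpolant of a constant equals that constant, hence $u-u_0$ is unchanged when $u$ is replaced by $u-\avgdomainu{i}$. We may therefore assume $\avgdomainu{i}=0$, so that Poincar\'e gives $\|u\|_{L^2(\Omega_i)}\le CH\,\Hseminorm{u}{i}$. By the triangle inequality it then suffices to bound $\|u_0\|_{L^2(\Omega_i)}$.

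Next I would split $u_0|_{\Omega_i}=\sum_{\vertexl} u_i(\vertexl)\,\basevertexl+\sumEij \avgedgeu{ij}\,\baseedge{ij}$ and estimate each term as (coefficient)$\times$($L^2$ norm of the basis function). The coefficients are controlled by the two standard two-dimensional finite-element estimates borrowed from \cite{dohrmann2008domain}: the discrete Sobolev inequality $|u_i(\vertexl)|^2\le C(1+\log(H/h))(\Hseminorm{u}{i}^2+H^{-2}\|u\|_{L^2(\Omega_i)}^2)$ for the pointwise vertex values, and the scaled trace estimate $|\avgedgeu{ij}|^2\le C(\Hseminorm{u}{i}^2+H^{-2}\|u\|_{L^2(\Omega_i)}^2)$ for the edge averages; with $\avgdomainu{i}=0$ both right-hand sides collapse, via Poincar\'e, to multiples of $\Hseminorm{u}{i}^2$. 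The $L^2$ norms of the discrete-harmonic basis functions are handled by the Poincar\'e-with-boundary-trace inequality $\|v\|_{L^2(\Omega_i)}^2\le C(H^2|v|_{H^1(\Omega_i)}^2+H\|v\|_{L^2(\partial\Omega_i)}^2)$ together with the known energy bounds $|\baseedge{ij}|_{H^1(\Omega_i)}^2\le C(1+\log(H/h))$ and its vertex-function analogue, giving $\|\baseedge{ij}\|_{L^2(\Omega_i)}^2\le CH^2(1+\log(H/h))$ and $\|\basevertexl\|_{L^2(\Omega_i)}^2\le CH^2$. Collecting, each vertex term carries the factor $(1+\log(H/h))$ from the discrete Sobolev inequality and each edge term the factor $(1+\log(H/h))$ from the basis-function energy, so that summing over the finitely many vertices and edges of $\partial\Omega_i$ yields $\|u_0\|_{L^2(\Omega_i)}^2\le C(1+\log(H/h))^2H^2\Hseminorm{u}{i}^2$; combined with the Poincar\'e bound on $\|u\|_{L^2(\Omega_i)}$ this proves the claim.

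The main obstacle is pinning down the logarithmic factor, which enters only through the discrete Sobolev inequality for the pointwise vertex values and the $H^1$ energy of the interface basis functions --- precisely the two-dimensional estimates of \cite{dohrmann2008domain}, which is why we adopt its geometric assumptions. A slightly cleaner alternative would avoid estimating $u_0$ altogether: since $u-u_0$ has (essentially) vanishing average over each edge $\Eij\subset\partial\Omega_i$, a generalized Poincar\'e inequality gives $\|u-u_0\|_{L^2(\Omega_i)}\le CH\,|u-u_0|_{H^1(\Omega_i)}$, reducing the problem to the standard coarse-interpolation energy estimate $|u-u_0|_{H^1(\Omega_i)}^2\le C(1+\log(H/h))^2\Hseminorm{u}{i}^2$; the only delicate point there is that the construction must make the edge average of $u_0$ agree with that of $u$, the $O(h)$ mismatch near the endpoints being absorbed into the lower-order terms.
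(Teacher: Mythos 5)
Your proposal is correct and follows essentially the same route as the paper's proof: subtract the subdomain average (justified by the reproduction-of-constants property of the coarse basis, which the paper uses implicitly when it adds and subtracts $\avgdomainu{i}$), control $u-\avgdomainu{i}$ by Poincar\'e, expand $u_0-\avgdomainu{i}$ on $\Omega_i$ in the vertex and edge basis functions, bound the coefficients by the logarithmic pointwise/average estimates of \cite[Lemma 3.2]{dohrmann2008domain}, and bound the $L^2(\Omega_i)$ norms of the basis functions by $CH^2$. The only differences are cosmetic: you derive the basis-function $L^2$ bounds from a Friedrichs-type inequality plus energy estimates where the paper simply asserts $\sum_{\vertexl}\int_{\Omega_i}|\basevertexl_i|^2\,d\Omega \le CH^2$, and your bookkeeping in fact yields a single logarithmic factor, which is slightly sharper than, and implies, the stated $\left(1+\log\frac{H}{h}\right)^2$ bound.
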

\begin{proof}
    By adding and subtracting the average $\avgdomainu{i}$ on the subdomain $\Omega_i$, it remains to bound only $\| u_0 - \avgdomainu{i} \|_{L^2(\Omega_i)}$, since $\| u - \avgdomainu{i} \|_{L^2(\Omega_i)}$ can be bound with the energy norm of $u$ by a Poincar{\'e} inequality type.
    The first term can be written as the sum of three parts,
    \begin{align}
        u_0 - \avgdomainu{i} &= \sum_{\vertexl} \left( u_i (\vertexl) - \avgdomainu{i} \right) \basevertexl_i (x) \label{eq: 4.3 line 1} \\
        &+ \sum_{\vertexl} \sum_{\substack{k \in \mathcal V^0_l \\k \neq i}} \left( u_k (\vertexl) - \avgdomainu{i} \right) \basevertexl_k (x) \label{eq: 4.3 line 2} \\
        &+ \sumEij \left( \avgedgeu{ij} - \avgdomainu{i} \right) \baseedge{ij} (x), \label{eq: 4.3 line 3} 
    \end{align}
    containing contributions from the vertices of $\Omega_i$ themselves (line \ref{eq: 4.3 line 1}), the adjacent vertices to each vertex of $\Omega_i$ (line \ref{eq: 4.3 line 2}) and the edges of $\Omega_i$ (line \ref{eq: 4.3 line 3}) respectively.
    The term (\ref{eq: 4.3 line 1}) can be bounded by using \cite[Lemma 3.2]{dohrmann2008domain} as follows:
    \begin{align*}
        \| \sum_{\vertexl} \left( u_i (\vertexl) - \avgdomainu{i} \right) \basevertexl_i (x) \|_{L^2(\Omega_i)}^2 &\leq \sum_{\vertexl} \int_{\Omega_i}  |u_i (\vertexl) - \avgdomainu{i} |^2 | \basevertexl_i (x) |^2 \ d\Omega \\
        &\leq C \left( 1 + \log \dfrac{H}{h} \right)^2 \Hseminorm{u}{i}^2 \sum_{\vertexl} \int_{\Omega_i}  | \basevertexl_i (x) |^2 \ d\Omega \\
        &\leq C \left( 1 + \log \dfrac{H}{h} \right)^2 H^2 \Hseminorm{u}{i}^2.
    \end{align*}
    The remaining terms (\ref{eq: 4.3 line 2}) and (\ref{eq: 4.3 line 3}) can be treated analogously.
\end{proof}
	
\section{Theoretical convergence rate bounds}\label{sec: theoretical convergence}
We are now ready to prove the convergence of GDSW preconditioners for the considered composite discontinuous Galerkin discretization of problem (\ref{eq: multiple parabolic pde}). 
\begin{theorem}
    The condition number of the GDSW preconditioned operator $P_{ad}$ defined in (\ref{P_ad}) for the discrete problem (\ref{eq:glob sys}) satisfies the bound
    \begin{equation*}
        \text{cond } (P_{ad}) \leq C \, \Phi_{\tau, H, h, \sigma_M, \sigma_m} \left( 1 + \log \dfrac{H}{h} \right)^2,
    \end{equation*}
    where $\Phi$ depends on the conductivity coefficients, the time step size $\tau$, the subdomain diameter $H$ and finite element size $h$
    \begin{equation*}
        \Phi_{\tau, H, h, \sigma_M, \sigma_m}  = \sigma_M \left( 1 + \dfrac{H}{h} \right) \left( 1 + \dfrac{1}{\tau \sigma_m} \right) 
        \qquad
        \sigma_m = \min_i |\sigma_i|,
        \qquad
        \sigma_M = \max_i |\sigma_i|,
    \end{equation*}
    and $C$ is a positive constant independent of $N, h, H, \tau$. 
 \label{main_theorem}
\end{theorem}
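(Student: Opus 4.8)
The plan is to apply the abstract theory of additive Schwarz methods to the space decomposition $V = V_0^C + \sum_{i=1}^N V_i'$ of Section \ref{sec: preconditioner}, using the bilinear form $d(\cdot,\cdot)$ of (\ref{eq: widetilde d_i})--(\ref{semidiscrete_pb}) as inner product. Since the coarse and local problems are solved exactly, the operators $P_0, P_i$ are $d$-orthogonal projections, the local stability parameter equals one, and the condition number is controlled by the product of two quantities: the stable-decomposition constant $C_0^2$, which bounds $\lambda_{\min}(P_{ad})$ from below by $C_0^{-2}$, and the spectral radius of the strengthened Cauchy--Schwarz interaction matrix, which bounds $\lambda_{\max}(P_{ad})$ from above. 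I would therefore split the argument into these two estimates and then carefully track the coefficients $\sigma_m, \sigma_M$ and the time step $\tau$ through both.

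For the upper bound I would use a finite-overlap (coloring) argument: a function in $V_i'$ is supported in the extended subdomain $\Omega_i'$, which intersects only a bounded number of the other $\Omega_j'$, independent of $N, h, H$; hence the local spaces group into $O(1)$ colors of mutually $d$-orthogonal subspaces, and adding the single coarse space $V_0^C$ contributes one more color, giving $\lambda_{\max}(P_{ad}) \leq C$ with $C$ independent of all parameters. This is the routine direction.

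The substantive work is the stable decomposition, for which I would take the splitting already exhibited in Section \ref{subsec: local spaces}, namely $u = u_0 + \sum_{i=1}^N u_i'$ with $u_0 \in V_0^C$ the coarse interpolant (\ref{eq: coarse decomposition}) and $u_i' = I^h(\chi_i(u-u_0))$, and prove
\begin{equation*}
    d(u_0,u_0) + \sum_{i=1}^N d(u_i', u_i') \leq C\,\Phi_{\tau,H,h,\sigma_M,\sigma_m}\left(1+\log\tfrac{H}{h}\right)^2 d(u,u).
\end{equation*}
Recalling $d = \tau a + p$, I would bound the coarse energy $\tau a(u_0,u_0) + p(u_0,u_0)$ using the energy-minimality of the discrete harmonic extension together with the stability of the vertex/edge interpolation, which introduces the $\left(1+\log\tfrac{H}{h}\right)^2$ factor. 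For the local pieces, the product rule controls the diffusion energy of $u_i'$, up to the factor $\sigma_M$, by the two integrals $\int|\grad\chi_i|^2|u-u_0|^2$ and $\int\chi_i^2|\grad(u-u_0)|^2$; for minimal overlap $\delta = h$ one has $|\grad\chi_i| = O(1/h)$, so the first integral is estimated by combining an overlap-strip bound with Lemma \ref{lemma 4.3}. This is precisely where the factor $1+\tfrac{H}{h}$ arises, as the GDSW factor $1+\tfrac{H}{\delta}$ of \cite{dohrmann2008domain} specialized to $\delta = h$.

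Finally, the coefficient and time-step bookkeeping produces $\Phi$: pulling out the worst conductivity gives the prefactor $\sigma_M$; the diffusion-type contributions are absorbed into $d(u,u)$ through the lower bound $\tau\sigma_m\Hseminorm{u}{i}^2 \leq \tau a_i(u,u) \leq d(u,u)$, yielding the $\tfrac{1}{\tau\sigma_m}$ term, while the interface mass-type contributions are absorbed directly through $p(u,u) \leq d(u,u)$, yielding the additive $1$; together these give the factor $1+\tfrac{1}{\tau\sigma_m}$. Combining $C_0^2 \sim \Phi\left(1+\log\tfrac{H}{h}\right)^2$ with the $O(1)$ upper bound then closes the estimate. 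I expect the main obstacle to be this stable decomposition in the composite, discontinuous setting: because $u$ may jump across the $\Omega_i$, the coarse interpolant must use per-subdomain vertex values and edge averages so that the jump energy $p(u-u_0,u-u_0)$ stays controlled, and one must verify that the technical tools of \cite{dohrmann2008domain} (the overlap-strip and harmonic-extension bounds, and the analogues used for lines (\ref{eq: 4.3 line 2})--(\ref{eq: 4.3 line 3}) of Lemma \ref{lemma 4.3}) remain valid compartment-by-compartment, while simultaneously achieving the sharp $1+\tfrac{H}{h}$ rather than an $(\tfrac{H}{h})^2$ dependence on the minimal overlap.
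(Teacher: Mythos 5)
Your proposal is correct and follows essentially the same route as the paper: abstract Schwarz theory with coloring for the upper bound and exact solvers for local stability, the stable decomposition $u = u_0 + \sum_i I^h(\chi_i(u-u_0))$ with the GDSW coarse interpolant, Lemma \ref{lemma 4.3} for the $L^2$ term, the $1+H/\delta$ overlap factor specialized to $\delta = h$, and the same coefficient bookkeeping yielding $\sigma_M\left(1+\tfrac{H}{h}\right)\left(1+\tfrac{1}{\tau\sigma_m}\right)$. The only difference is presentational: where you invoke the product rule and interpolation stability generically, the paper carries out the explicit three-part vertex/edge expansion of $u_0 - \avgdomainu{i}$ and cites Lemmas 3.2 and 3.4 of \cite{dohrmann2008domain}, and it handles the jump term $p_i$ by noting the edge contribution cancels across $\Eij$ — exactly the mechanism you anticipated.
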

\begin{remark}
The dependence of $\Phi_{\tau, H, h, \sigma_M, \sigma_m}$ on the time step size $\tau$ is not observed in the results of the numerical tests in Sec. \ref{sec: numerical results}, indicating that the theoretical bound might not be sharp in $\tau$.
\end{remark}

\begin{proof}
The proof is based on general abstract Schwarz theory as well as previous works on GDSW preconditioners, see \cite{dohrmann2008domain, heinlein2019adaptive, heinlein2022adaptive, toselli2006domain}. In particular, we are required to verify three assumptions known as strengthened Cauchy-Schwarz inequality, local stability and stable decomposition, see (\cite{toselli2006domain}). However, by considering a standard coloring argument, the strengthened Cauchy-Schwarz inequality can be satisfied with a constant upper bound. Moreover, the local stability assumption holds true, since we use exact local solvers.
Therefore, we will only need to prove a stable decomposition for the considered subspace decomposition.

We proceed as in standard Schwarz theory by estimating the constant $C^2_0$ required by the stable decomposition \cite{toselli2006domain} and by considering the coarse and local solvers separately. \\

{\it Coarse solver.} As usually done in domain decomposition algorithms, we work with one subdomain $\Omega_i$ per time. We consider $u - \avgdomainu{i}$, and instead of (\ref{eq: coarse decomposition}) we consider $u_0 - \avgdomainu{i}$, being $\avgdomainu{i}$ the average of $u$ over subdomain $\Omega_i$:
    \begin{equation*}
        \avgdomainu{i}(x) = \avgdomainu{i} \left( \sum_{\vertexl} \sum_{k \in \mathcal V^0_l} \basevertexl_k (x) + \sumEij \baseedge{ij} \right).
    \end{equation*}
    Thus we want to bound the energy 
    \begin{equation*}
        \widetilde d_i (u_0 - \avgdomainu{i}, u_0 -\avgdomainu{i}) = \tau \, a_i (u_0 -\avgdomainu{i}, u_0 - \avgdomainu{i}) + p_i (u_0 - \avgdomainu{i}, u_0 - \avgdomainu{i}).
    \end{equation*}
    The first term can be treated in the following way: recalling the coarse decomposition (\ref{eq: coarse decomposition}), let us write explicitly $u_0 - \avgdomainu{i}$:
    \begin{equation*}
        u_0 - \avgdomainu{i} = 
        \sum_{\vertexl} \left( u_i(\vertexl) - \avgdomainu{i} \right) \basevertexl_i (x) 
        + \sum_{\vertexl} \sum_{\substack{k \in \mathcal V^0_l \\k \neq i}} \left( u_k(\vertexl) - \avgdomainu{i} \right) \basevertexl_k (x) 
        + \sumEij \left( \avgedgeu{ij} - \avgdomainu{i} \right) \baseedge{ij} (x).
    \end{equation*}
    The $H^1$-norm can be estimated by
    \begin{align*}
            a_i (u_0 - \avgdomainu{i}, &u_0 - \avgdomainu{i}) = \int_{\Omega_i} \sigma_i \grad ( u_0 - \avgdomainu{i} ) \cdot \grad ( u_0 - \avgdomainu{i} ) 
            \leq  \sum_{\vertexl} \int_{\Omega_i} \sigma_i \left( u_i(\vertexl) - \avgdomainu{i} \right)^2 | \grad \basevertexl_i (x) |^2 \\
            &+ \sum_{\vertexl} \sum_{\substack{k \in \mathcal V^0_l \\k \neq i}} \int_{\Omega_i} \sigma_i \left( u_k(\vertexl) - \avgdomainu{i} \right)^2 | \grad \basevertexl_k (x) |^2 
            + \sumEij \int_{\Omega_i} \sigma_i \left( \avgedgeu{ij} - \avgdomainu{i} \right)^2 | \grad \baseedge{ij} (x) |^2 \\
            &\overset{(A)}{\lesssim} | \sigma_i | \left( 1 + \log \dfrac{H}{h} \right) \Hseminorm{u}{i}^2 \left( \sum_{\vertexl} \Hseminorm{\basevertexl_i}{i}^2 + \sum_{\vertexl} \sum_{\substack{k \in \mathcal V^0_l \\k \neq i}} | \grad \basevertexl_k (x) |^2 + \sumEij \Hseminorm{\baseedge{ij}}{i}^2 \right) \\
            &\overset{(B)}{\lesssim} | \sigma_i | \left( 1 + \log \dfrac{H}{h} \right)^2 \Hseminorm{u}{i}^2 
            \leq \left( 1 + \log \dfrac{H}{h} \right)^2 a_i (u,u)
    \end{align*}
    For inequality $(A)$, the terms $u_k(\vertexl) - \avgdomainu{i}$ and $\avgedgeu{ij} - \avgdomainu{i}$ can be bounded using \cite[Lemma 3.2]{dohrmann2008domain}, while $(B)$ is obtained thanks to the fact that the first seminorm in the brackets is limited and the second is zero by definition, while the last term can be bounded considering \cite[Lemma 3.4]{dohrmann2008domain}.\\

    \noindent
    Regarding the jump term $p_i(\cdot, \cdot)$, it is useful to write explicitly the average $\avgdomainu{i}$ over the edge $\Eij$ in terms of edge basis functions (see Figure \ref{fig:nonoverlap vertex} for the notation):
    \begin{equation*}
        \avgdomainu{i} (x) =
        \begin{cases} 
            \avgdomainu{i} \left( \overbasevertexl_i (x) + \baseedge{ij} (x) + \underbasevertexl_i (x) \right) \\
            \avgdomainu{i} \left( \overbasevertexl_j (x) + \baseedge{ij} (x) + \underbasevertexl_j (x) \right)
        \end{cases}
    \end{equation*}
    We need to focus on the jump
    \begin{equation*}
        \jump{u_0 - \avgdomainu{i}}_{ij} = \left( u_{0,i} - \avgdomainu{i} \right) - \left( u_{0,j} - \avgdomainu{i} \right),
    \end{equation*}
    being
    \begin{equation*}
        \begin{aligned}
            u_{0,i} &= u (\oververtexl_i) \, \overbasevertexl_i (x) + u (\undervertexl_i) \, \underbasevertexl_i (x) + \avgedgeu{ij} \baseedge{ij} (x) \\
            u_{0,j} &= u (\oververtexl_j) \, \overbasevertexl_j (x) + u (\undervertexl_j) \, \underbasevertexl_j (x) + \avgedgeu{ij} \baseedge{ij} (x)
        \end{aligned}
    \end{equation*}
    The inner edge contribution $\avgedgeu{ij} \baseedge{ij}(x)$ can be neglected, since it appears from both sides of $\Eij$. 
    Thus, if we focus on only one endpoint of $\Eij$ (for instance $\oververtexl$), we need to estimate the jump of $\bar w = ( u(\overbasevertexl_i) - \avgdomainu{i} ) \, \overbasevertexl_i (x) - ( u(\overbasevertexl_j) - \avgdomainu{i} ) \, \overbasevertexl_j (x)$:
    \begin{equation*}
        \begin{aligned}
            p_i (\bar w, \bar w) &\lesssim \sumEij \left( 1 + \log \dfrac{H}{h} \right) \Hseminorm{u}{i}^2 \left( \| \overbasevertexl_i (x) \|_{L^2(\Eij)}^2 + \| \overbasevertexl_j (x) \|_{L^2(\Eij)}^2 \right) \\
            &\lesssim \left( 1 + \log \dfrac{H}{h} \right) \Hseminorm{u}{i}^2 
            \lesssim \dfrac{1}{| \sigma_i |} \left( 1 + \log \dfrac{H}{h} \right) a_i (u,u)
        \end{aligned}
    \end{equation*}
    In the same fashion, it is possible to estimate the jump for the term $\underline w = ( u(\underbasevertexl_i) - \avgdomainu{i} ) \, \underbasevertexl_i (x) - ( u(\underbasevertexl_j) - \avgdomainu{i} ) \, \underbasevertexl_j (x)$.
    
    We can estimate the energy related to the $i$-th problem by
    \begin{align*}
        \widetilde d_i (u_0 - \avgdomainu{i}, u_0 -\avgdomainu{i}) 
        &= \tau \, a_i (u_0 -\avgdomainu{i}, u_0 - \avgdomainu{i}) + p_i (u_0 - \avgdomainu{i}, u_0 - \avgdomainu{i}) \\
        &\leq C \left( 1 + \dfrac{1}{\tau |\sigma_i|} \right) \left( 1 + \log \dfrac{H}{h} \right)^2 \widetilde d_i (u,u),
    \end{align*}
    thus leading to
    \begin{equation*}
        d (u_0 - \avgdomainu{i}, u_0 -\avgdomainu{i}) \leq C \left( \tau + \dfrac{1}{\sigma_m} \right) \left( 1 + \log \dfrac{H}{h} \right)^2 d (u,u),
    \end{equation*}
    where $\sigma_m = \min_{i=0,\dots,N} |\sigma_i|$.\\

{\it Local solvers.} In Section \ref{subsec: local spaces} we have defined 
    \begin{equation*}
        u = u_0 + \sumNoverlap u_i', \quad \text{for } u_0 \in V_0^C, 
        \qquad
        \text{and }
        \qquad
        u_i' = I^h ( \chi_i (u-u_0)) =: I^h \chi_i w \in V_i',
        \quad 
        \text{with } i=0,\dots,N. 
    \end{equation*}
    We recall that, for all $u_i'$, $v_i' \in V_i'$, $u'_i = \{ u'_{i,j} \}_{j=0}^N $,
    \begin{align}
        d_i (u_i', v_i') := d(u_i', v_i') &= \tau \sum_{k=0}^N \sigma_k \int_{\Omega_k} \grad u'_{i,k} \grad v'_{i,k} + \sum_{E_{kj}} C_m \int_{E_{kj}} (u'_{i,k} - u'_{i,j} ) (v'_{i,k} - v'_{i,j} ) \nonumber \\
        &= \tau \sigma_i \int_{\Omega_i} \grad u'_{i,i} \grad v'_{i,i} + \tau \sum_{j \, : \,  \Omega_j \cap \Gamma_{i,\delta} \neq \emptyset} \sigma_j \int_{\Omega_j \cap \Gamma_{i,\delta}} \grad u'_{i,j} \grad v'_{i,j} \label{eq: stiff local} \\
        &+ \sum_{j \, : \, \partial \Omega_i \cup \partial \Omega_j \neq \emptyset} C_m \int_{\Eij} (u'_{i,i} - u'_{i,j} ) (v'_{i,i} - v'_{i,j} ). \label{eq: jump local}
    \end{align}
    Here the same considerations made in the proof of \cite[Theorem 3.1]{dohrmann2008domain} hold. 
    Thus, the energy norms in line (\ref{eq: stiff local}) (consider for instance the first term - the second follows analogously) can be bounded as in the proof of \cite[Lemma 3.10]{toselli2006domain}:
    \begin{align*}
        \Hseminorm{u'_{i,i}}{i}^2 &\leq \Hseminorm{I^h ( \chi_i w)}{i}^2 
        \leq \dfrac{C}{\delta^2} \left[ \delta^2 \Hseminorm{w}{i}^2 + \dfrac{H}{\delta} \delta^2 \Hnorm{w}{i}^2 \right] \\
        &\leq \dfrac{C}{\delta^2} \left[ \delta^2 \left( 1 + \dfrac{H}{\delta} \right) \Hseminorm{w}{i}^2 + \dfrac{H}{\delta} \delta^2 \parallel w \parallel^2_{L^2(\Omega_i)}  \right] ,
    \end{align*}
    where $\delta$ is the common overlap parameter, which we chose in Section \ref{sec: preconditioner} to be $h$.
    Each of the above contributions can be bound as follows.
    \begin{itemize}
        \item[i)] By considering the triangle inequality and the result obtained in the coarse solver for the energy norm, it holds
        $$\Hseminorm{w}{i}^2 \lesssim \left( 1 + \log \dfrac{H}{h} \right)^2 \Hseminorm{u}{i}^2.  $$
        \item[ii)] Thanks to Lemma \ref{lemma 4.3}, we have
        $$\parallel w \parallel^2_{L^2(\Omega_i)} \leq C \left( 1 + \log \dfrac{H}{h} \right)^2 H^2 \Hseminorm{u}{i}^2.$$
    \end{itemize}
    Collecting the above contributions leads to the following bound for the energy norms of line (\ref{eq: stiff local}):
    \begin{align*}
        |\sigma_i| \Hseminorm{u'_{i,i}}{i}^2 &\leq C |\sigma_i | \left( 1 + \log \dfrac{H}{h} \right)^2 \left[ \Hseminorm{u}{i}^2 + \dfrac{H}{\delta} \Hseminorm{u}{i}^2 + \dfrac{H}{\delta} H^2 \Hseminorm{u}{i}^2 \right] \\
        &\leq C |\sigma_i | \left( 1 + \log \dfrac{H}{h} \right)^2 \left( 1 + \dfrac{H}{\delta} \right) \Hseminorm{u}{i}^2 .
    \end{align*}
    The cubic term in $H$ can be neglected, since it vanishes when the diameter $H$ decreases by increasing the number of subdomains $\Omega_i$ and keeping fixed the global domain $\Omega$.  
    Regarding the jump contributions in line (\ref{eq: jump local}), it holds
    \begin{equation*}
        p_i (u'_i, u'_i) = p_i(I^h \chi_i w, I^h \chi_i w) \lesssim p_i (u, u) + p_i (u_0, u_0).
    \end{equation*}
    The first term appears in the bilinear form $d_i(\cdot, \cdot)$, while the bound for the second term follows the same proof procedure as in the coarse solver by adding and subtracting the average $\avgdomainu{i}$.

    Therefore, with the choice of $\delta = h$ (see Section \ref{sec: preconditioner}), the local solvers carry the bound
    \begin{equation*}
        d_i(u'_i, u'_i) = d_i (I^h \chi_i w, I^h \chi_i w) \leq C \sigma_M \left( 1 + \dfrac{H}{h} \right) \left( 1 + \log \dfrac{H}{h} \right)^2 d_i(u,u),
    \end{equation*}
    where $\sigma_M = \max_{i=0,\dots,N} |\sigma_i|$.

    In conclusion, by collecting the above result and the coarse solver estimate, 
    \begin{equation*}
        d(u_0, u_0) + \sumNoverlap d_i (u_i', u_i') \leq C \sigma_M \left( 1 + \dfrac{H}{h} \right) \left( 1 + \dfrac{1}{\tau \sigma_m} \right) \left( 1 + \log \dfrac{H}{h} \right)^2 d(u,u).
    \end{equation*}
\end{proof}

\section{An application to the cardiac EMI (Extracellular - Membrane - Intracellular) reaction - diffusion model} \label{sec: numerical results}
We consider a particular application from the electrophysiology field for numerical testings of the proposed preconditioner. Our focus is the so-called cardiac EMI model\footnote{The acronym EMI stands for Extracellular, cell Membrane and Intracellular spaces, since this formulation takes into account each of these as separate entities.} \cite{tveito2021bis, tveito2021tris, tveito2017, tveito2021}, which provides a microscopic representation of the electrical propagation in the cardiac tissue by means of diffusion equations for each single cell coupled through a (possibly non-linear) reaction term on the boundaries (cell membranes).  
This innovative model overcomes the well-established cardiac Bidomain model \cite{franzone2014mathematical}, since it represents the extracellular space as well as the intracellular space and the cell membrane as individuals, allowing for realistic characterization of each. While for the Bidomain equations we can find several works related to its mathematical properties \cite{franzone2014mathematical, fedele2023comprehensive, tung1978bidomain, veneroni2009reaction} and solution strategies \cite{africa2023matrix, barnafi2024robust, centofanti2023comparison, chegini2022efficient, huynh2022parallel, huynh2021newton, plank2007algebraic, vigmond2008solvers, weiser_2022}, these various aspects for the cardiac EMI model are still open problems.

\subsection{The EMI microscopic description of cardiac electrical propagation}
We consider $N$ connected cells immersed in the extracellular liquid, which altogether form the cardiac tissue $\Omega$, where generally $\Omega\subset\R^d$, with $d\in\{2,3\}$; in the following numerical experiments we will consider $d=2$, in compliance with the proposed convergence analysis.
It is straightforward to visualize the parallelism between these $N+1$ objects and the non-overlapping partition $\{ \Omega_i \}_i$, with $i=0, \dots, N$ (by denoting with $\Omega_0$ the extracellular subdomain).
The interaction between each cells, the extracellular media and their neighbouring happens by means of ionic exchanges, which provide the reaction term on the boundaries $\partial \Omega_i$. These currents are allowed to propagate among the intracellular spaces through the gap junctions, special protein channels which allow the passage of ions directly between two intracellular environments, \cite{rohr2004}.

\begin{figure}
    \centering
    \includegraphics[scale=.45]{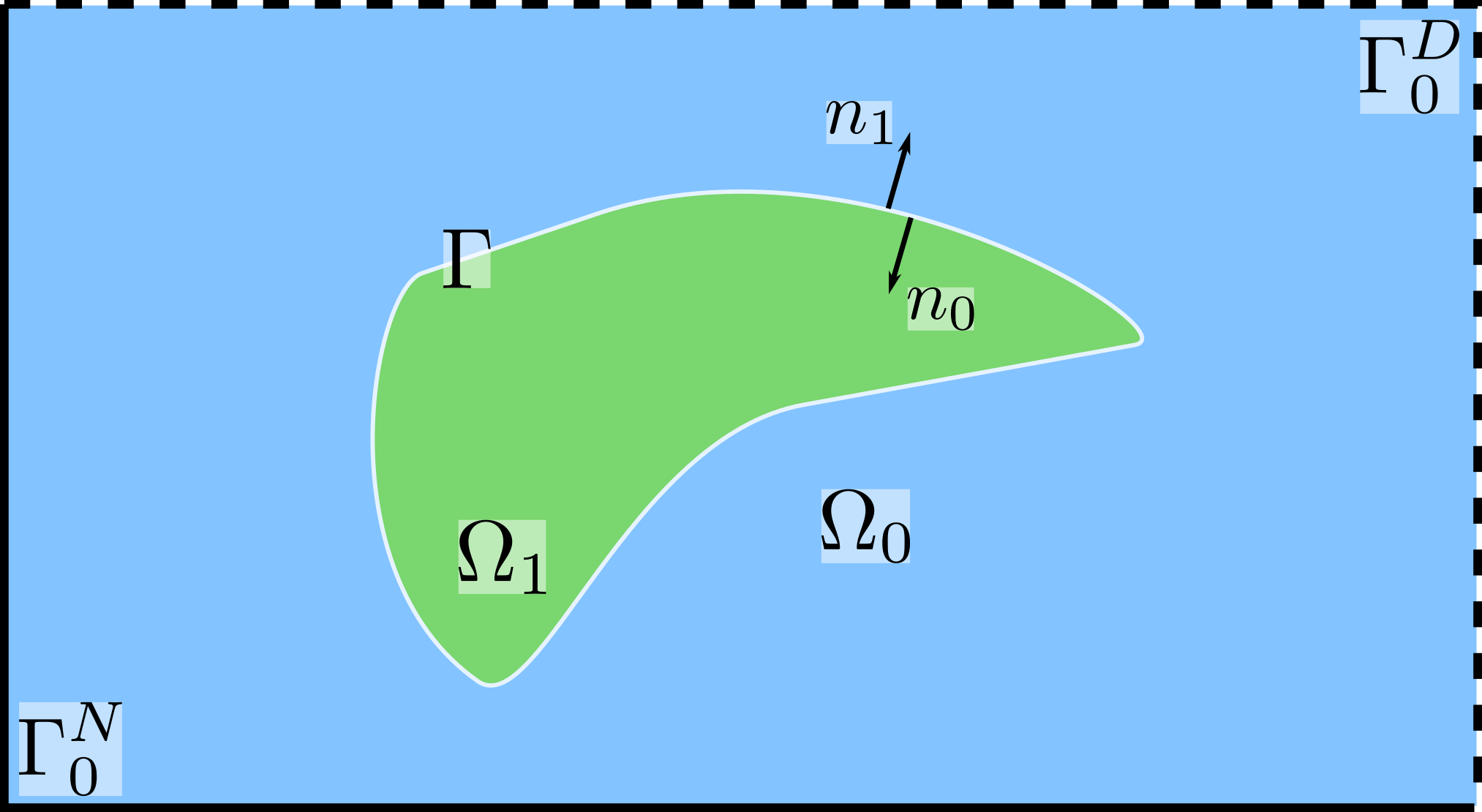}
    \hspace{5mm}
    \includegraphics[scale=.45]{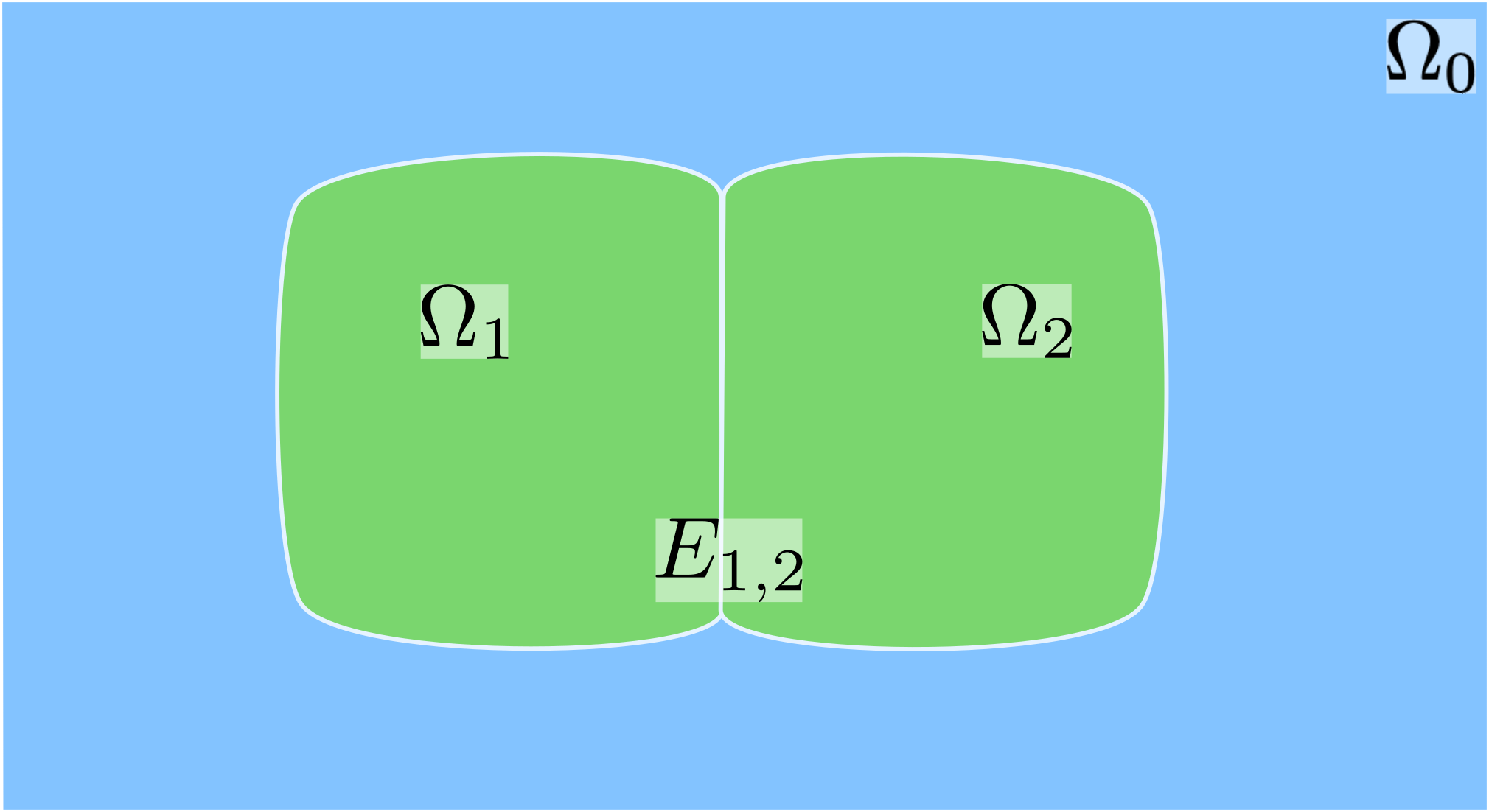}
    \caption{Left: representation of the situation described in system (\ref{eq: multiple parabolic pde}), with only one cell $\Omega_1$ (green) immersed in the extracellular liquid (light blue); the external boundary of the extracellular space $\Omega_0$ is divided into $\Gamma_0^D$ (black, dashed) and $\Gamma_0^N$ (black, solid), with boundary conditions given in (\ref{eq: external boundary conditions}). Right: representation of the situation described in system (\ref{eq: multiple parabolic pde}) considering two neighbouring cells $\Omega_1$ and $\Omega_2$, with common boundary $E_{1,2}$.}
    \label{fig: cells}
\end{figure}

The situation depicted on the left of Figure \ref{fig: cells} can be modeled by system (\ref{eq: multiple parabolic pde}), where we need to add the ionic equations
\begin{equation} \label{eq: ionic model}
   \dfrac{\partial c}{\partial t} - C(\jump{u}_{ij}, w, c) = 0, 
	\qquad 
	\dfrac{\partial w}{\partial t} - R(\jump{u}_{ij}, w) = 0,  
\end{equation}
that model the ion flow dynamic by means of ordinary differential equations, describing the time evolution of ion concentrations $c$ and gating variables $w$. 
Here, the transmembrane voltage $\jump{u}_{ij} = u_i - u_j$ represents the jump in the value of the electric potentials between two neighboring cells $i$ and $j$.
Thus, the reaction term $F(\cdot)$ will depend also from the gating and concentration variables, representing either the ionic current $I_\text{ion}(\jump{u}_{ij}, c, w)$ or the gap junctions $G(\jump{u}_{ij})$ (which we assume here to be linear in the potential jump). 
We assume that the (extracellular) potential is fixed on part of the external boundary $\Gamma^D_0$ while the remaining $\Gamma^N_0 = \Gamma \backslash \Gamma^D_0$ is insulated:
\begin{equation} \label{eq: external boundary conditions}
    u_0 = 0 \quad \text{on } \Gamma^D_0 ,
    \qquad 
    -n^T_0 \cdot \sigma_0 \grad u_0 = 0 \quad \text{on } \Gamma^N_0.
\end{equation}

In this framework, $\sigma_i$ is the conductivity coefficient\footnote{In general $\sigma_i$ are tensors; however, in this work, we have treated them as scalar since the EMI model assumes isotropic diffusion in the cells and in the extracellular matrix. This is motivated by the fact that the pronounced anisotropy in the homogenized bidomain model is an effect of the cellular geometry, which here in the EMI model is resolved explicitly.} in $\Omega_i$ and  $n^T_i$ the outward normal on $\partial \Omega_i$. We refer to \cite{tveito2021bis, tveito2017} for a formal derivation of the complete EMI system.

The solution obtained from such problem is defined up to an arbitrary constant: in order to ensure uniqueness from a numerical point of view, we impose zero average over its domain for the extracellular component $u_0$. 

In conclusion, the cardiac EMI model reads
\begin{equation} \label{eq: EMI model}
    \begin{cases}
        - \dive  (\sigma_i \grad u_i) = 0  & \text{in } \Omega_i \quad \forall i = 0, \dots, N, \\
        u_i - u_j = \jump{u}_{ij}       & \text{on } \Eij, \\
        - n^T_i \sigma_i \grad u_i = C_m \dfrac{\partial \jump{u}_{ij}}{\partial t} + F(\jump{u}_{ij}, c, w)  & \text{on } \Eij, \\
	    \dfrac{\partial c}{\partial t} - C(\jump{u}_{ij}, w, c) = 0, 
		\qquad 
		\dfrac{\partial w}{\partial t} - R(\jump{u}_{ij}, w) = 0,\\
        u_i(0) = u_{i,0}, \qquad w(0)=w_0, \qquad c(0)=c_0, & \text{in } \Omega_i \quad \forall i = 0, \dots, N. \\
    \end{cases}
\end{equation}
	 
We consider a splitting strategy for the time solution of system (\ref{eq: EMI model}). At each time step, we solve first the ionic model, given the jump $\jump{u}_{ij}$ from the previous time step; then, we update the cell-by-cell model with the newly-computed $c$ and $w$ and solve it with respect to the electric potential. 
With this approach, we can easily refer to the discretized formulation derived in Section \ref{sec: model} and the preconditioning technique introduced in Sec. (\ref{sec: preconditioner}), obtaining an EMI-GDSW preconditioner. 
Another recently proposed approach consider a boundary integral formulation of system (\ref{eq: EMI model}), see \cite{rosilho2024boundary}.

\subsection{Numerical tests}
We consider a Matlab implementation of our EMI-GDSW preconditioner for a two-dimensional rectangular geometry where the extracellular domain frames a group of cells. This code considers a linear gap junction between cells, each coinciding with an edge $\Eij$, and the Aliev-Panfilov ionic model \cite{aliev1996simple} for the update of the gating variables; this model does not include concentration variables. 
We fix the time step size to $\tau = 0.05$ ms, except for the last Section \ref{sec: dependence tau}, where the parameter $\tau$ is varied between $0.005$ and $0.1$ ms in order to study its effects on the EMI-GDSW convergence rate. 
The external current needed for the activation is applied at the bottom-left corner of the domain for $1$ ms, with an intensity of 50 mA/cm$^2$; the total simulation time is of $[0,5]$ ms. The initial value for the potential $u_i(0)$ is set to $-85$ mV, while the initial gating $w_0$ value is set to 0.
Unless otherwise specified, the conductivity coefficients are  $\sigma_i = 3 \times 10^{-3}$ in all cells.

The discrete linear system (\ref{eq: global algebraic system}) arising at each time step is solved through an iterative Conjugate Gradient method, either unpreconditioned (CG) or with a preconditioning strategy. The stopping criterion compares the $L^2$-norm of the relative (preconditioned) residual with a fixed tolerance of $10^{-6}$.
For simplicity, we implemented a GDSW preconditioner with a coarse problem spanned by the subdomain vertex basis functions, see Sec. (\ref{subsec: coarse space}).
For comparison,  
we also consider a classic Additive Schwarz algorithm (AS); see \cite{toselli2006domain} for further details. 

All the tests have been performed on a Linux workstation equipped with an Intel i9-10980XE CPU with 18 cores running at 3.00 GHz.

\subsubsection{Scalability tests}
We start with numerical results investigating the scalability of the proposed EMI-GDSW preconditioned solver. 
We consider a time interval of $[0,5]$ ms, for a total of 100 uniform time steps.
The number of cells (subdomains) considered varies from $N = 2\times 2$ to $N = 32 \times 32$, each one discretized by $24 \times 4$ finite elements.
We report the condition number ($k_2$) and the number of linear iterations (it) at the final time step of the simulation; see Table \ref{tab: scal} and Figure \ref{fig: scal}.
The results show that the GDSW preconditioner has the best performance in all the cases considered, in terms of both number of linear iterations and condition numbers. Both these parameters mildly increase while initially increasing the number of cells $N$, but they approach a constant upper bound afterwards, in agreement with the main bound of Theorem \ref{main_theorem}, since the ratio $H/h$ is kept constant in this test. On the other hand, for both unpreconditioned CG as well as AS, iteration counts and condition numbers increase with increasing $N$.

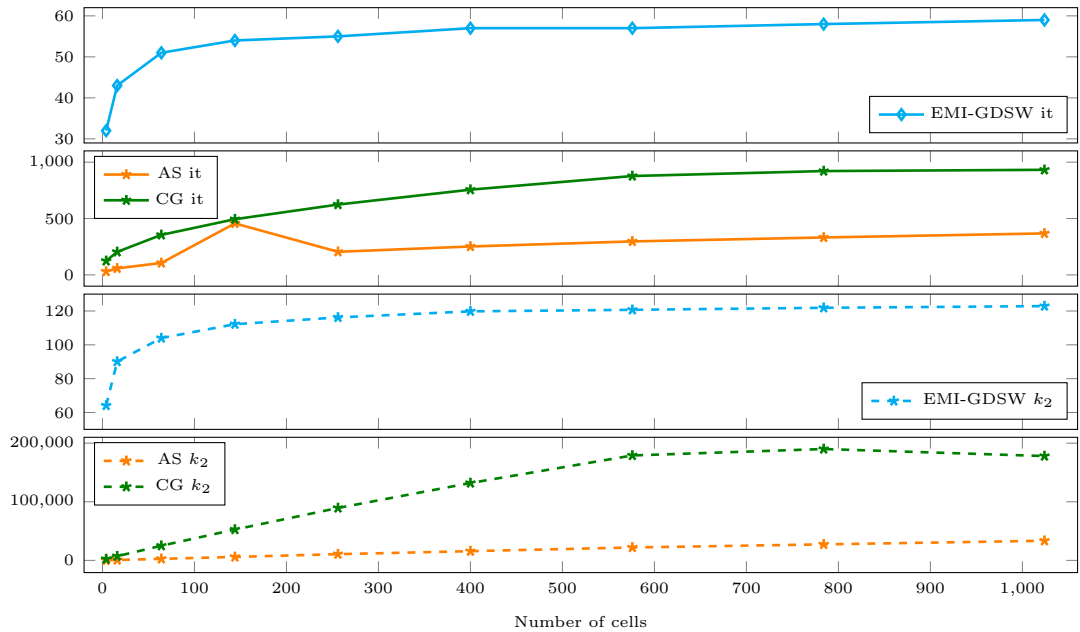
\begin{figure}
	    \centering
\begin{subfigure}{\textwidth}
   \centering
	    \begin{tabular}{c|rcrcrc}
	        \toprule
	        \multirow{2}{*}{nb of cells} &\multicolumn{2}{c}{EMI-GDSW}    &\multicolumn{2}{c}{AS}     &\multicolumn{2}{c}{CG}    \\
	                        &$k_2$ &it      &$k_2$ &it         &$k_2$  &it       \\      
	        \midrule
	        $2\times2$      &64.1   &32       &149.2       &30       &2.20e+03   &124   \\
	        $4\times4$      &90.1   &43       &632.4       &58       &7.09e+03   &204   \\
	        $8\times8$      &104.0  &51       &2.61e+03   &106       &2.50e+04   &355   \\
	        $12\times12$    &112.2  &54       &5.87e+03   &157       &5.26e+04   &494   \\
	        $16\times16$    &116.2  &55       &1.03e+04   &205       &8.94e+04   &624   \\
	        $20\times20$    &119.8  &57       &1.58e+04   &252       &1.32e+05   &756   \\
	        $24\times24$    &120.7  &57       &2.20e+04   &297       &1.79e+05   &877   \\
	        $28\times28$    &121.9  &58       &2.73e+04   &332       &1.90e+05   &921   \\
	        $32\times32$    &122.9  &59       &3.35e+04   &368       &1.78e+05   &932   \\
	        \bottomrule
	    \end{tabular}
     \caption{Condition number ($k_2$)  and linear iterations (it) at final time $t=5$ ms for EMI-GDSW (left), AS (center), unpreconditioned CG (right).}
     	    \label{tab: scal}
\vspace{0.7cm}
 \end{subfigure}
\begin{subfigure}{\textwidth}
	\begin{tikzpicture}
		\begin{groupplot}
			[
			group style={group size=1 by 4, vertical sep=3pt, ylabels at=edge left,
				x descriptions at=edge bottom},
			width=.8\textwidth,
			height=1.8cm,
			scale only axis,
			scaled ticks = false,
			tick label style={/pgf/number format/fixed, /pgf/number format/precision=5},
			tick label style={font=\tiny},
			label style={font=\tiny},
			legend style={font=\tiny},
			xlabel={Number of cells},
			xmin=-20, xmax=1060,
			unbounded coords=jump]
\nextgroupplot[legend style={at={(0.99,0.21)},anchor=east}, ymin=29, ymax=62]
            \addplot+[cyan, line width=1pt, mark=diamond] table [x=cells, y=iter_gdsw, col sep=comma, filter discard warning=false, unbounded coords=discard] {plot/results/numer.csv};			
			\legend{EMI-GDSW it};	
			\nextgroupplot[legend style={at={(0.01,0.73)},anchor=west}, ymin=-100, ymax=1100]
			\addplot+[orange, line width=1pt, mark=star] table [x=cells, y=iter_as, col sep=comma, filter discard warning=false, unbounded coords=discard] {plot/results/numer.csv};
			\addplot+[green!50!black, line width=1pt, mark=star] table [x=cells, y=iter_cg, col sep=comma, filter discard warning=false, unbounded coords=discard] {plot/results/numer.csv};
			\legend{AS it, CG it};
			\nextgroupplot[legend style={at={(0.99,0.21)},anchor=east}, ymin=50, ymax=130]
			\addplot+[cyan, dashed, line width=1pt, mark=star] table [x=cells, y=cond_gdsw, col sep=comma, filter discard warning=false, unbounded coords=discard] {plot/results/numer.csv};
			\legend{EMI-GDSW $k_2$};
			\nextgroupplot[legend style={at={(0.01,0.73)},anchor=west}, ymax=210000]
			\addplot+[orange, dashed, line width=1pt, mark=star] table [x=cells, y=cond_as, col sep=comma, filter discard warning=false, unbounded coords=discard] {plot/results/numer.csv};
			\addplot+[green!50!black, dashed, line width=1pt, mark=star] table [x=cells, y=cond_cg, col sep=comma, filter discard warning=false, unbounded coords=discard] {plot/results/numer.csv};
			\legend{AS $k_2$, CG $k_2$};
		\end{groupplot}
	\end{tikzpicture}
 \caption{Plots of condition number ($k_2$) and linear iterations (it) from the Table a) above.}
   \label{fig: scal}
  \end{subfigure}
   \caption{\emph{EMI-GDSW Scalability tests} on $[0,5]$ ms. Condition number ($k_2$) 
 and linear iterations (it) at final time $t=5$ ms.
 Fixed time step $\tau=0.05$. Increasing number of cells from $4$ to $1024$, each discretized with $24\times4$ finite elements. }
  \label{fig: scal_total}
 \end{figure}

\subsubsection{Optimality tests}
We then test the quasi-optimality of our method by fixing the number of cells (subdomains) to be $N=4 \times 4$) and decrease the mesh size $h$, i.e. increase the ratio $H/h$. 
Table \ref{tab: optimality} and Figure \ref{fig: optimality} show again that EMI-GDSW has a much better performace of both AS and unpreconditioned CG.
The polylogarithmic growth of the EMI-GDSW condition number is not easily detectable for this range of $H/h$, but the growth seem definitely less than linear.

 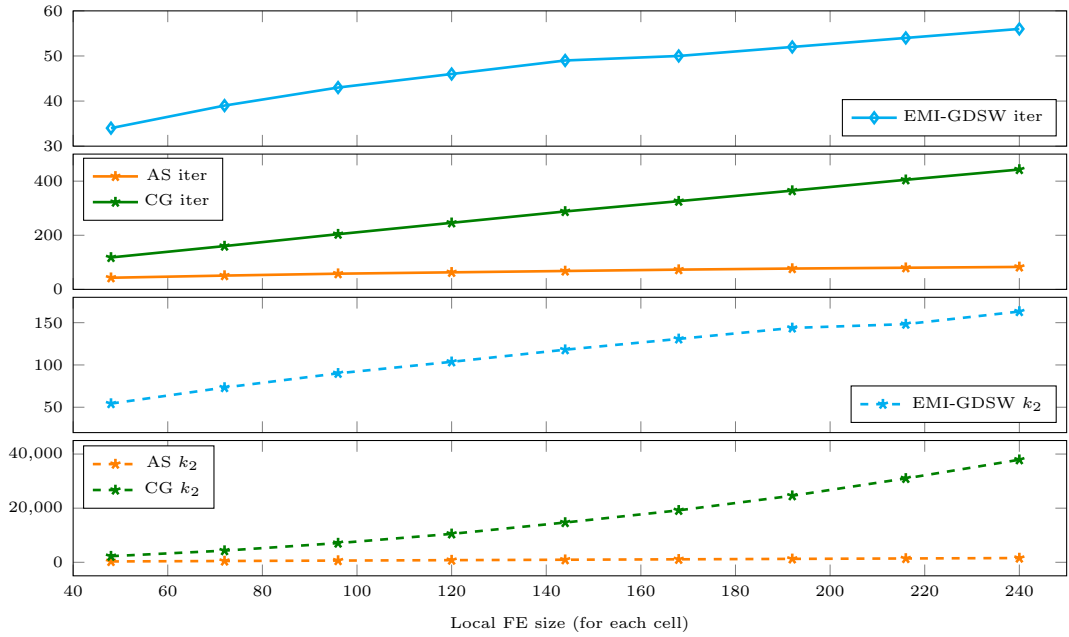
\begin{figure}
	    \centering
\begin{subfigure}{\textwidth}
   \centering
	    \centering
	    \begin{tabular}{cc|rcrcrc}
	        \toprule
	        \multirow{2}{*}{$Lcy$}  &\multirow{2}{*}{$H/h=6 \times Lcy$}  &\multicolumn{2}{c}{EMI-GDSW}    &\multicolumn{2}{c}{AS}     &\multicolumn{2}{c}{CG}    \\
	        &          &$k_2$    &it                 &$k_2$              &it         &$k_2$              &it  \\   
	        \midrule
	        2   &12     &54.3  &34       &313.6  &43        &2.26e+03 &118     \\
	        3   &18     &73.3  &39       &473.7  &51        &4.31e+03 &160     \\
	        4   &24     &90.1  &43       &632.4  &58        &7.09e+03 &204     \\
	        5   &30     &103.7 &46       &791.5  &63        &1.05e+04 &246     \\
	        6   &36     &118.1 &49       &945.1 &68         &1.47e+04 &288     \\
	        7   &42     &130.8 &50       &1.10e+03 &73      &1.92e+04 &326     \\
	        8   &48     &143.8 &52       &1.26e+03 &77      &2.46e+04 &365     \\
	        9   &54     &148.3 &54       &1.41e+03 &80      &3.10e+04 &405     \\
	        10  &60     &163.2 &56       &1.57e+03 &83      &3.79e+04 &443     \\
	        \bottomrule
	    \end{tabular}
	    \caption{Condition number ($k_2$)  and linear iterations (it) at final time $t=5$ ms for EMI-GDSW (left), AS (center), unpreconditioned CG (right).}
	    \label{tab: optimality}
     \vspace{0.7cm}
\end{subfigure}
\begin{subfigure}{\textwidth}
    \centering
    \begin{tikzpicture}
			\begin{groupplot}
				[
				group style={group size=1 by 4, vertical sep=3pt, ylabels at=edge left,
					x descriptions at=edge bottom},
				width=.8\textwidth,
				height=1.8cm,
				scale only axis,
				scaled ticks = false,
				tick label style={/pgf/number format/fixed, /pgf/number format/precision=5},
				tick label style={font=\tiny},
				label style={font=\tiny},
				legend style={font=\tiny},
				xlabel={Local FE size (for each cell)},
				xmin=40, xmax=250,
				unbounded coords=jump]
				
				\nextgroupplot[legend style={at={(0.99,0.21)},anchor=east}, ymin=30, ymax=60]
				\addplot+[cyan, line width=1pt, mark=diamond] table [x=fesize, y=iter_gdsw_opt, col sep=comma, filter discard warning=false, unbounded coords=discard] {plot/results/numer.csv};
				
				\legend{EMI-GDSW iter};
				
				\nextgroupplot[legend style={at={(0.01,0.74)},anchor=west}, ymin=0, ymax=500]
				\addplot+[orange, line width=1pt, mark=star] table [x=fesize, y=iter_as_opt, col sep=comma, filter discard warning=false, unbounded coords=discard] {plot/results/numer.csv};
				
				\addplot+[green!50!black, line width=1pt, mark=star] table [x=fesize, y=iter_cg_opt, col sep=comma, filter discard warning=false, unbounded coords=discard] {plot/results/numer.csv};
				
				\legend{AS iter, CG iter};
				
				\nextgroupplot[legend style={at={(0.99,0.21)},anchor=east}, ymin=20, ymax=180]
				
				\addplot+[cyan, dashed, line width=1pt, mark=star] table [x=fesize, y=cond_gdsw_opt, col sep=comma, filter discard warning=false, unbounded coords=discard] {plot/results/numer.csv};
				
				\legend{EMI-GDSW $k_2$};
				
				\nextgroupplot[legend style={at={(0.01,0.73)},anchor=west}, ymin=-5000, ymax=45000]
				\addplot+[orange, dashed, line width=1pt, mark=star] table [x=fesize, y=cond_as_opt, col sep=comma, filter discard warning=false, unbounded coords=discard] {plot/results/numer.csv};
				
				\addplot+[green!50!black, dashed, line width=1pt, mark=star] table [x=fesize, y=cond_cg_opt, col sep=comma, filter discard warning=false, unbounded coords=discard] {plot/results/numer.csv};
				
				\legend{AS $k_2$, CG $k_2$};
				
			\end{groupplot}
		\end{tikzpicture}
    \caption{Plots of condition number ($k_2$) and linear iterations (it) from the Table a) above.}
    \label{fig: optimality}
    \end{subfigure}
     \caption{\emph{EMI-GDSW optimality tests} on $[0,5]$ ms. Condition number $k_2$  and
    linear iterations (it) at the final time $t=5$ ms. 
    Fixed time step size $\tau=0.05$ and number of $4\times 4$ cells, each discretized with increasing number of finite elements.}
     \label{fig: optimality_total}
    \end{figure}
	
\subsubsection{Dependence on the time step size} \label{sec: dependence tau}
Here we study the convergence rate dependence of the proposed solver on the time step size $\tau$.
We consider $N = 12\times12$ cells, each one discretized with $24\times 4$ Q1 finite elements. 
The time interval is $[0,5]$ ms, where we vary $\tau$ from $0.005$ to $0.1$ ms.
As in our previous study \cite{huynh2023convergence}, we observe that the condition number ($k_2$) and the iteration counts (it) for both unpreconditioned CG and AS solvers increase when the time step size $\tau$ is decreased, see Table \ref{tab: dependence time step} and Fig. \ref{fig: dependence time step}. On the other hand, the EMI-GDSW preconditioner is only marginally affected by the reduction of $\tau$, yielding almost bouded $k_2$ and it. values.
This test shows that the reduction of the time step size $\tau$ does not impair the performance of the GDSW solver, indicating that the bound in Thm. \ref{main_theorem} might not be sharp in $\tau$.

\begin{figure}
	    \centering
\begin{subfigure}{\textwidth}
	    \centering
	    \begin{tabular}{l|cccccc}
	        \toprule
	        \multirow{2}{*}{$\tau$} &\multicolumn{2}{c}{EMI-GDSW}    &\multicolumn{2}{c}{AS}     &\multicolumn{2}{c}{CG}    \\
	                   &$k_2$      &it            &$k_2$      &it         &$k_2$       &it       \\      
	        \midrule
	        $0.005$      &130.7  &57       &9.32e+03   &183       &2.55e+05   &630   \\
	        $0.01$       &127.5  &57       &8.33e+03   &177       &1.55e+05   &627   \\
	        $0.02$       &122.1  &56       &7.25e+03   &168       &9.38e+04   &584   \\
	        $0.05$       &112.2  &54       &5.87e+03   &157       &5.26e+04   &494  \\
	        $0.1$        &132.3  &54       &3.10e+03   &114       &4.26e+04   &468   \\
	        \bottomrule
	    \end{tabular}
	    \caption{Condition number ($k_2$)  and linear iterations (it) at final time $t=5$ ms for EMI-GDSW (left), AS (center), unpreconditioned CG (right).}
 \label{tab: dependence time step}
     \vspace{0.7cm}
\end{subfigure}
\begin{subfigure}{\textwidth}
    \centering
    \begin{tikzpicture}
		\begin{groupplot}
			[
			group style={group size=1 by 4, vertical sep=3pt, ylabels at=edge left,
				x descriptions at=edge bottom},
			width=.8\textwidth,
			height=1.8cm,
			scale only axis,
			scaled ticks = false,
			tick label style={/pgf/number format/fixed, /pgf/number format/precision=5},
			tick label style={font=\tiny},
			xtick={0.005, 0.01, 0.02, 0.05, 0.1},
			label style={font=\tiny},
			legend style={font=\tiny},
			xlabel={Time step $\tau$},
			xmin=0, xmax=0.105, 
			unbounded coords=jump]
			
			\nextgroupplot[legend style={at={(0.99,0.78)},anchor=east}, ymin=53,ymax=58]
			\addplot+[cyan, line width=1pt, mark=diamond] table [x=dt, y=iter_gdsw_dt, col sep=comma, filter discard warning=false, unbounded coords=discard] {plot/results/numer.csv};
			
			\legend{EMI-GDSW iter};
			
			\nextgroupplot[legend style={at={(0.99,0.38)},anchor=east}, ymin=50, ymax=700]
			\addplot+[orange, line width=1pt, mark=star] table [x=dt, y=iter_as_dt, col sep=comma, filter discard warning=false, unbounded coords=discard] {plot/results/numer.csv};
			
			\addplot+[green!50!black, line width=1pt, mark=star] table [x=dt, y=iter_cg_dt, col sep=comma, filter discard warning=false, unbounded coords=discard] {plot/results/numer.csv};
			
			\legend{AS iter, CG iter};
			
			\nextgroupplot[legend style={at={(0.99,0.21)},anchor=east}, ymin=105, ymax=139]
			
			\addplot+[cyan, dashed, line width=1pt, mark=star] table [x=dt, y=cond_gdsw_dt, col sep=comma, filter discard warning=false, unbounded coords=discard] {plot/results/numer.csv};
			
			\legend{EMI-GDSW $k_2$};
			
			\nextgroupplot[legend style={at={(0.99,0.73)},anchor=east}, ymin=-50000, ymax=350000]
			\addplot+[orange, dashed, line width=1pt, mark=star] table [x=dt, y=cond_as_dt, col sep=comma, filter discard warning=false, unbounded coords=discard] {plot/results/numer.csv};
			
			\addplot+[green!50!black, dashed, line width=1pt, mark=star] table [x=dt, y=cond_cg_dt, col sep=comma, filter discard warning=false, unbounded coords=discard] {plot/results/numer.csv};
			
			\legend{AS $k_2$, CG $k_2$};
			
		\end{groupplot}
	\end{tikzpicture}
     \caption{Plots of condition number ($k_2$) and linear iterations (it) from the Table a) above.}
         \label{fig: dependence time step}
 \end{subfigure}
    \caption{\emph{EMI-GDSW dependence on the time step size.} Condition numbers $k_2$ and linear iterations (it)  at the final time $t=5$ ms, when the time step size $\tau$ is increased from $0.005$ to $0.1$. Fixed number of $12\times12$ cells, each discretized with $24\times4$ finite elements.}
    \label{fig: dependence time step_total}
\end{figure}
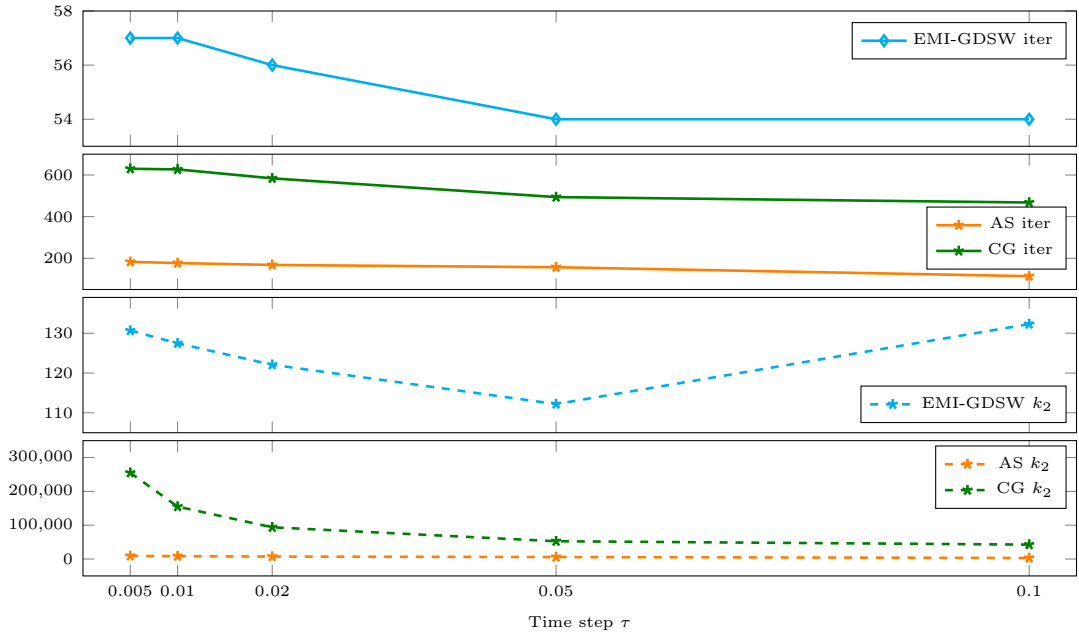

\subsubsection{EMI-GDSW robustness varying the conductivity coefficients}
The last set of tests investigates the robustness of the EMI-GDSW solver with respect to heterogeneous distributions of intracellular diffusion coefficients $\sigma_i$, for $i=1,\dots,N$. 
In particular, we consider three different distributions with $N=8\times 8$ cells, each discretized with $48 \times 8$ Q1 finite elements:\\
- \texttt{checkboard} distribution, where the conductivities alternate between two values $\sigma_i$ and $\sigma_i^{\star}$ in a checkboard fashion;\\ 
- \texttt{capsule} distribution, where we include a inner square block of $4\times 4$ cells with  conductivity $\sigma_i^{\star}$, while in the surrounding cells thee conductivity remain $\sigma_i$;\\
- \texttt{random} distribution, where we randomly generate for each cell the coefficient $\sigma_i^{\star}$. \\
The value $\sigma_i^{\star}$ is obtained by scaling $\sigma_i$ by a factor $10^{-1}$, $10^{-2}$, $10^{-3}$ and $10^{-4}$, except for the \texttt{random} setting, where $\sigma_i^\star = (\text{scaling factor}) \times (\sigma_i + n_\text{rand})$, being $n_\text{rand}$ a randomly generated number between 0 and $10^{-3}$ (in order perturb $\sigma_i$ of a value with the same order of magnitude).
We report in Figures \ref{fig: time evolution 1} and \ref{fig: time evolution 2} three different time snapshots for each distribution, and also including for comparison the homogeneous distribution (\texttt{normal}) with  $\sigma_i = 3 \times 10^{-3}$ in all cells.

Table \ref{tab:robustness} reports both condition number ($k_2$) and number of linear iterations (it) at $t=5$ ms. 
The results show the robustness of EMI-GDSW, since both parameters remain fairly unchanged, with only a slight decrease in the former when reducing $\sigma_i^\star$. 
This is to be expected, since decreasing the diffusion coefficient $\sigma_i^\star$ and considering enough modified subdomains, the mass term from the jump would prevail.

    \begin{figure}
	    \centering
        \texttt{normal} \\
        \vspace{2mm}
	    $t=1$ ms\\ \vspace{0.5mm} \includegraphics[width=0.7\textwidth]{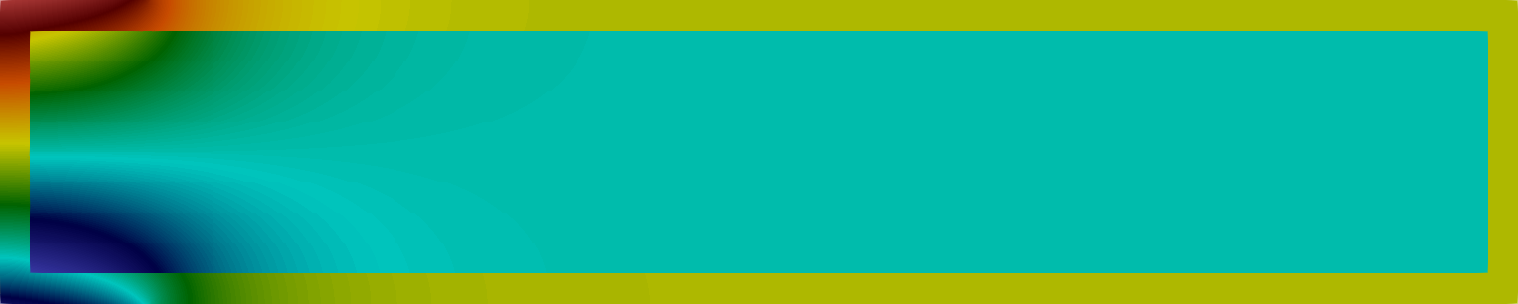}  \\
        \vspace{1mm}
        $t=15$ ms\\ \vspace{0.5mm} \includegraphics[width=0.7\textwidth]
        {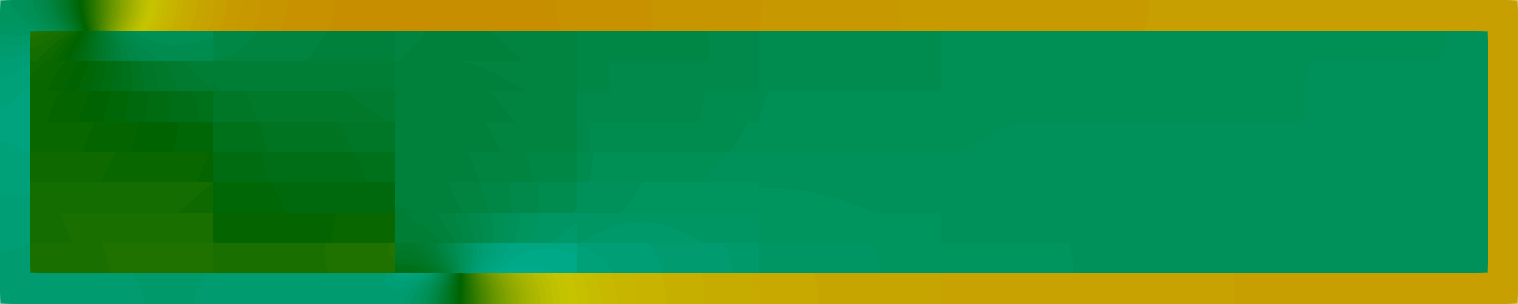} \\
        \vspace{1mm}
        $t=30$ ms\\ \vspace{0.5mm} \includegraphics[width=0.7\textwidth]{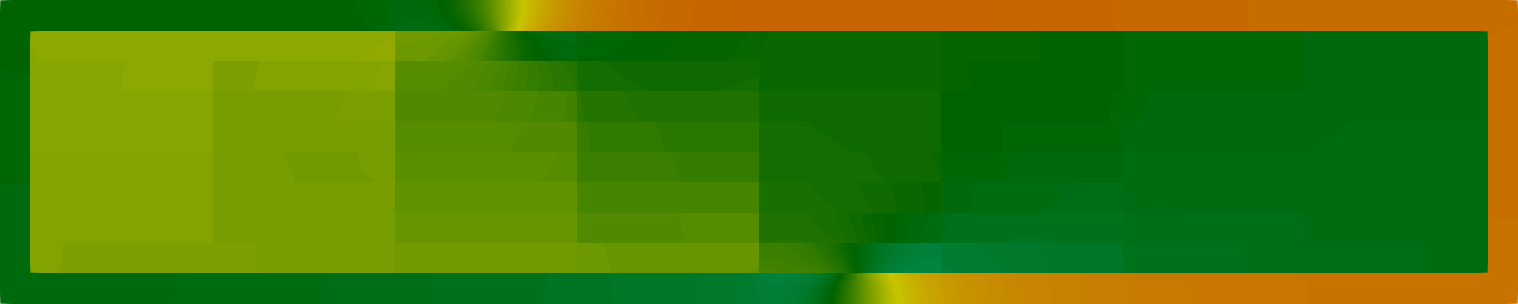} \\
        \vspace{1mm}
        \includegraphics[width=0.6\textwidth]{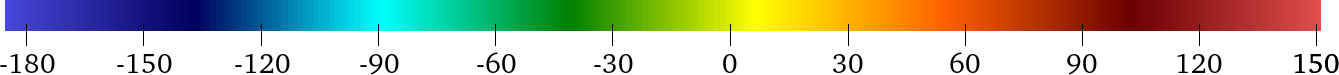} \\
	    \vspace{2mm} \hrule \vspace{2mm}
     
        \texttt{capsule} \\
        \vspace{2mm}
	    $t=1$ ms\\ \vspace{0.5mm} \includegraphics[width=0.7\textwidth]{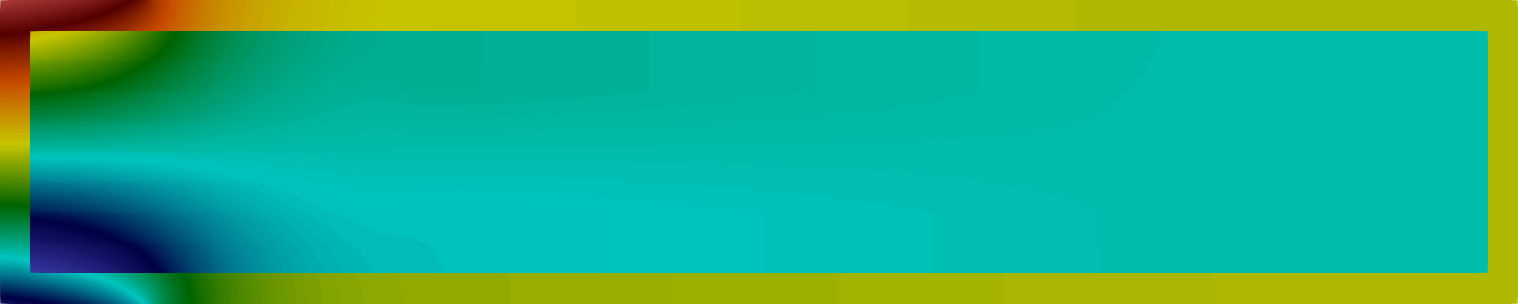} \\
        \vspace{1mm} 
        $t=15$ ms\\ \vspace{0.5mm} \includegraphics[width=0.7\textwidth]{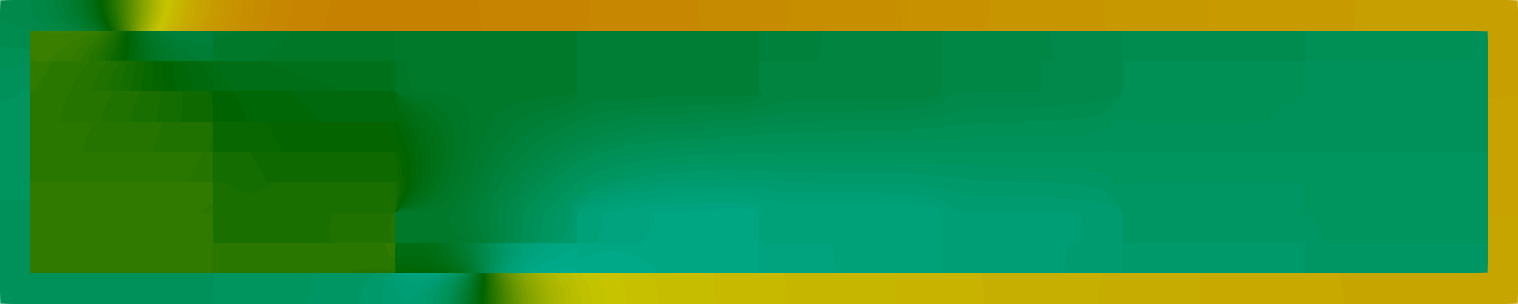} \\
        \vspace{1mm}
        $t=30$ ms\\ \vspace{0.5mm} \includegraphics[width=0.7\textwidth]{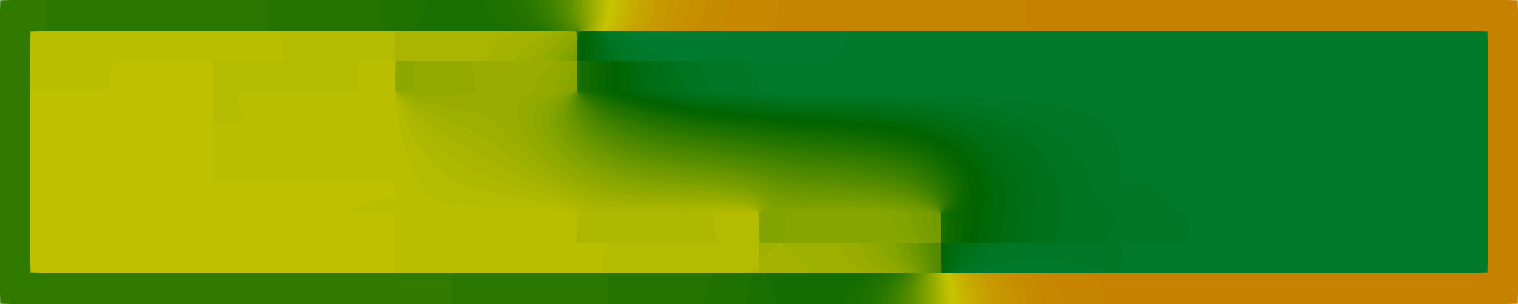} \\
        \vspace{1mm}
        \includegraphics[width=0.6\textwidth]{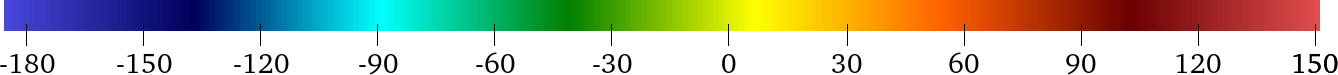}
	    \caption{Snapshots at times $t=1, 15, 30$ ms of the EMI electric potentials for the \texttt{normal} distribution (top)  with $\sigma_i = 3 \times 10^{-3}$ and the \texttt{capsule} distribution (bottom)  with an inner block of $4\times 4$ cells with $\sigma_i^\star=10^{-4} \sigma_i$.}
	    \label{fig: time evolution 1}
	\end{figure}
 
 \begin{figure}
	    \centering
        \texttt{checkboard} \\
        \vspace{2mm}
	    $t=1$ ms\\ \vspace{0.5mm} \includegraphics[width=0.7\textwidth]{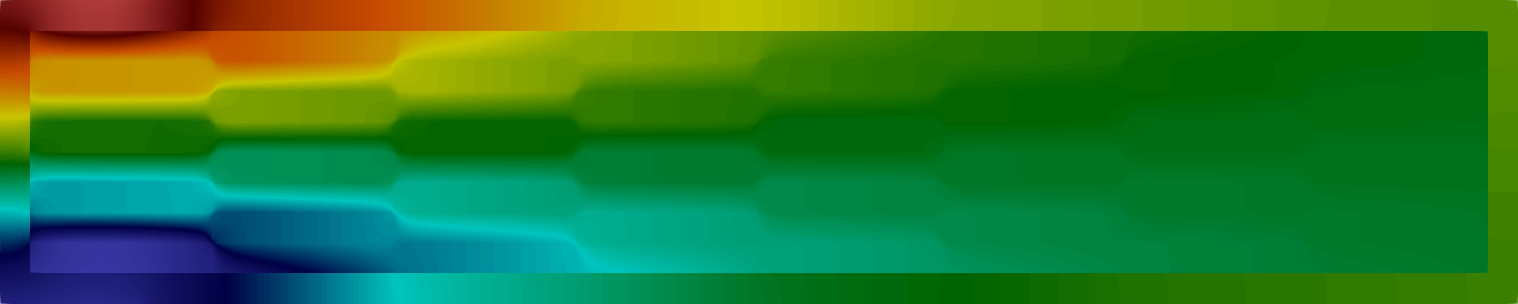}  \\
        \vspace{1mm}
        $t=10$ ms\\ \vspace{0.5mm} \includegraphics[width=0.7\textwidth]{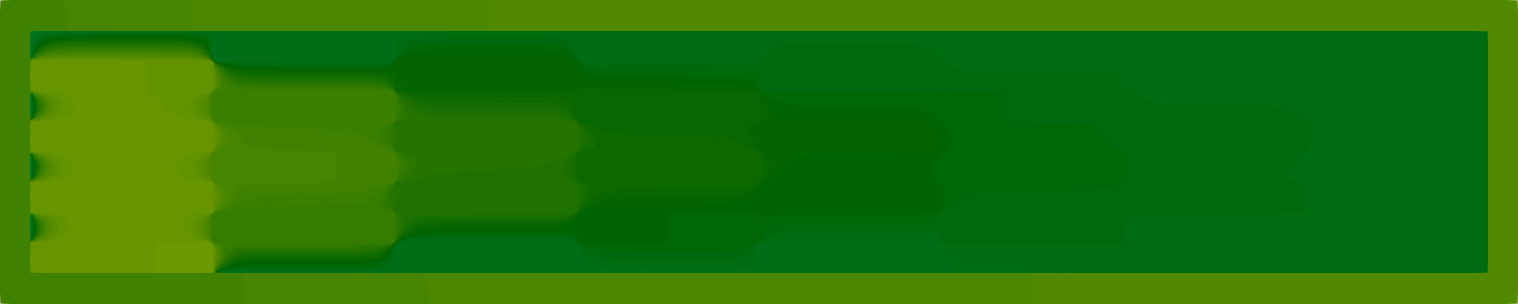} \\
        \vspace{1mm}
        $t=19$ ms\\ \vspace{0.5mm} \includegraphics[width=0.7\textwidth]{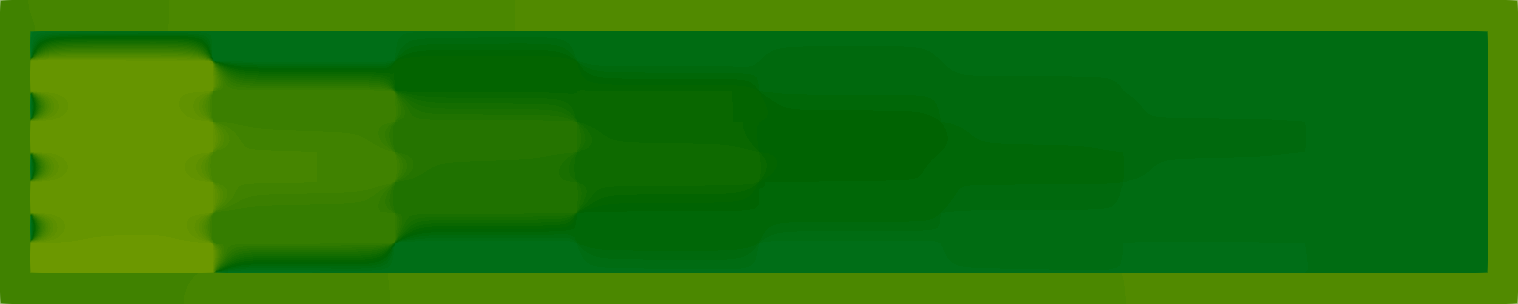} \\
        \vspace{1mm}
        \includegraphics[width=0.6\textwidth]{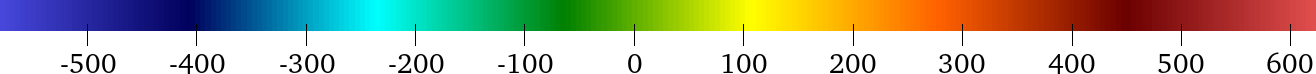} \\
        \vspace{2mm} \hrule \vspace{2mm}
        \texttt{random} \\
        \vspace{2mm}
	    $t=1$ ms\\ \vspace{0.5mm} \includegraphics[width=0.7\textwidth]{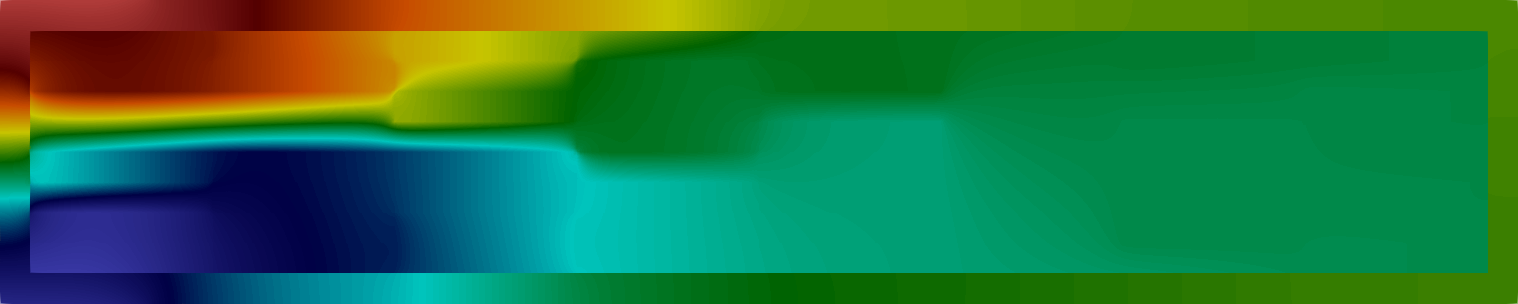}  \\
        \vspace{1mm}
        $t=10$ ms\\ \vspace{0.5mm} \includegraphics[width=0.7\textwidth]{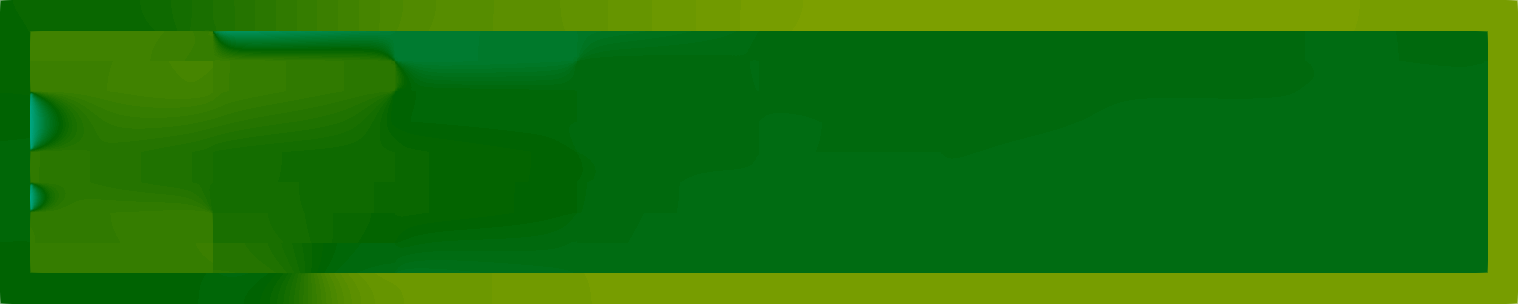} \\
        \vspace{1mm}
        $t=19$ ms\\ \vspace{0.5mm} \includegraphics[width=0.7\textwidth]{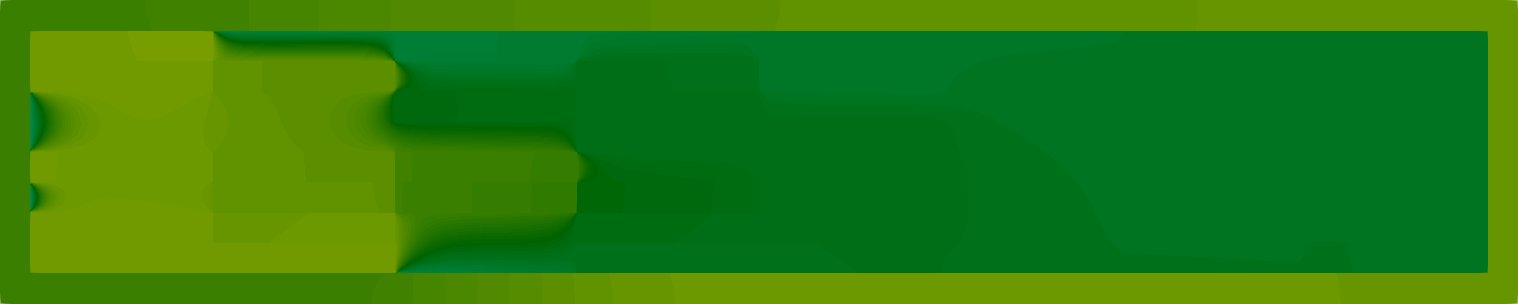} \\
        \vspace{1mm}
        \includegraphics[width=0.6\textwidth]{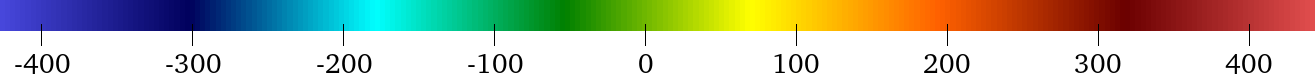} 
	    \caption{Snapshots at times $t=1, 10, 19$ ms of the EMI electric potentials for the \texttt{checkboard} distribution (top) with checkboard-like alternating cells with $\sigma_i^\star=10^{-4} \sigma_i$ and \texttt{random} distribution (bottom) with  $\sigma_i^\star = 10^{-4} \cdot (\sigma_i + n_\text{rand})$, where $n_\text{rand}$ is randomly generated between 0 and $10^{-3}$.}
	    \label{fig: time evolution 2}
	\end{figure}
	
	\begin{table}[!ht]
	    \centering
	    \label{tab: robustness}
	    \begin{tabular}{lr|cccccccccc}
	        \toprule
                 	    & $\alpha$   &\multicolumn{2}{c}{$1$}
                       &\multicolumn{2}{c}{$10^{-1}$}
                       &\multicolumn{2}{c}{$10^{-2}$}    &\multicolumn{2}{c}{$10^{-3}$}     &\multicolumn{2}{c}{$10^{-4}$}    \\
	     &   &$k_2$    &it    &$k_2$    &it          &$k_2$    &it          &$k_2$       &it         &$k_2$         &it  \\   
	        \midrule
\texttt{checkboard} & \multirow{2}*{$\sigma_i^\star = \alpha\sigma_i$} & 167.3 & 62         &137.2  &55       &118.6   &52       &117.3   &53       &117.2   &54   \\
\texttt{capsule}   &      &167.3 & 62     &166.6  &66       &166.7   &65       &167.8   &65       &164.4   &65   \\
         \midrule 	      
 \texttt{random}   & $\sigma_i^\star = \alpha \cdot (\sigma_i + n_\text{rand})$     &  - & -      &191.6  &67       &143.1   &64       &177.8   &68       &137.6   &64   \\
	        \bottomrule
	    \end{tabular}
	    \caption{\emph{EMI-GDSW robustness tests.} EMI-GDSW condition numbers $k_2$ and linear iterations (it) at time $t=5$ ms with three different distributions (\texttt{checkboard}, \texttt{capsule}, \texttt{random}) of the conductivity coefficients $\sigma_i$. Fixed time step $\tau=0.05$ on the interval $[0,5]$ ms. 
     Fixed number of $8\times8$ cells, each discretized with $48\times8$ finite elements. }
    \label{tab:robustness}
 \end{table}

\section{Conclusions} \label{sec: conclusions}
We have designed a generalized Dryja-Smith-Widlund (GDSW) preconditioner for the solution of composite Discontinuous Galerkin discretizations of parabolic reaction-diffusion problems, where the solution can present discontinuities across the domain. These situations naturally arise, for instance, in the context of microscopic biomechanics modeling or in multiscale problems where a model order reduction technique leads to face coupling of different dimensionalities.
We mathematically prove a scalable and quasi-optimal convergence rate bound for the proposed GDSW preconditioner, which is polylogarithmic in the ratio $H\backslash h$ but depends on the time step size, the subdomain diameter, the finite element size and the magnitude of the diffusion coefficient.
Extensive two-dimensional numerical tests confirm this theoretical result for the solution of the cardiac EMI reaction-diffusion model.
Possible future works should address the extension of this result to three-dimensions, as well as investigating the convergence of more sophisticated and advanced GDSW preconditioners, such as adaptive, reduced and multilevel GDSW.



\begin{thebibliography}{99}

    \bibitem{africa2023matrix}
        P.C. Africa, M. Salvador, P. Gervasio, L. Dede' and A. Quarteroni,
        \emph{A matrix–free high–order solver for the numerical solution of cardiac electrophysiology}, J. Comp. Physics, 478, 111984 (2023).

    \bibitem{aliev1996simple} 
    	R.R. Aliev and A.V. Panfilov,
    	\emph{A simple two-variable model of cardiac excitation},
    	Chaos, Solitons \& Fractals, 7.3, pp. 293-301 (1996).

    \bibitem{antonietti2007schwarz}
        P.F. Antonietti and B. Ayuso,
        \emph{Schwarz domain decomposition preconditioners for discontinuous Galerkin approximations of elliptic problems: non-overlapping case},
        ESAIM: Math. Model. Numer. Anal., 41.1, pp. 21--54 (2007).

    \bibitem{antonietti2008multiplicative}
        P.F. Antonietti and B. Ayuso,
        \emph{Multiplicative Schwarz methods for discontinuous Galerkin approximations of elliptic problems},
        ESAIM: Math. Model. Numer. Anal., 42.3, pp. 443--469 (2008).

    \bibitem{ayuso2014multilevel}
        B. Ayuso de Dios, M. Holst, Y. Zhu and L. Zikatanov,
        \emph{Multilevel preconditioners for discontinuous Galerkin approximations of elliptic problems with jump coefficients},
        Math. Comput., 83.287, pp. 1083--1120 (2014).

    \bibitem{ayuso2009uniformly}
        B. Ayuso de Dios and L. Zikatanov, \emph{Uniformly convergent iterative methods for discontinuous Galerkin discretizations}, J. Sci. Comput., 40, pp. 4--36 (2009).

    \bibitem{balay2022petsc} 
    	S. Balay et al.,
    	\emph{{PETS}c {W}eb page},
    	\href{https://petsc.org/}{https://petsc.org/} (2022).

    \bibitem{barnafi2024robust}
        N. A. Barnafi, N.M.M. Huynh, LF. Pavarino and S. Scacchi, 
        \emph{Robust parallel nonlinear solvers for implicit time discretizations of the Bidomain equations with staggered ionic models},
        Comput. Math. Appl., 167, pp. 134--149 (2024).

    \bibitem{budisa2023algebraic}
        A. Budisa, X. Hu, M. Kuchta, K.-A. Mardal and L.T. Zikatanov,
        \emph{Algebraic multigrid methods for metric-perturbed coupled problems},
        SIAM J. Sci. Comput., 46(3), pp. A1461-A1486 (2024).

    \bibitem{canuto2014bddc}
        C. Canuto and L. F. Pavarino, A. B. Pieri,
        \emph{BDDC preconditioners for continuous and discontinuous Galerkin methods using spectral/hp elements with variable local polynomial degree}, 
        IMA J. Numer. Anal., 34(3), pp. 879--903 (2014).

    \bibitem{centofanti2024multigrid}
        E. Centofanti, N.M.M. Huynh, L.F. Pavarino and S. Scacchi, \emph{Multigrid solvers on GPUs for the cardiac EMI model}, in preparation (2024).

    \bibitem{centofanti2023comparison}
        E. Centofanti and S. Scacchi, \emph{A comparison of Algebraic Multigrid Bidomain solvers on hybrid CPU-GPU architectures}, Comp. Methods Appl. Mech. Eng. 423:116875 (2024).

    \bibitem{chegini2022efficient}
        F. Chegini, T. Steinke and M. Weiser,
        \emph{Efficient adaptivity for simulating cardiac electrophysiology with spectral deferred correction methods},
        arXiv preprint arXiv:2311.07206 (2023).

    \bibitem{chegini2023efficient}
        F. Chegini, A. Frohely, N.M.M. Huynh, L.F. Pavarino, M. Potse, S. Scacchi and M. Weiser,
        \emph{Efficient numerical methods for simulating cardiac electrophysiology with cellular resolution},
        In: 10th edition of the International Conference on Computational Methods for Coupled Problems in Science and Engineering (2023).

    \bibitem{corti2023discontinuous}
        M. Corti, P.F. Antonietti, F. Bonizzoni, L. Ded{\'e} and A. Quarteroni,
        \emph{Discontinuous Galerkin methods for Fisher-Kolmogorov equation with application to $\alpha $-synuclein spreading in Parkinson's disease},
        Comp. Methods Appl. Mech. Eng. 417: 116450 (2023).

    \bibitem{corti2023numerical}
        M. Corti, P.F. Antonietti, L. Ded{\'e} and A.M. Quarteroni,
        \emph{Numerical modeling of the brain poromechanics by high-order discontinuous Galerkin methods},
        Math. Mod. Meth. Appl. Sci., pp. 1-33 (2023).

    \bibitem{dobrev2006two}
        V.A. Dobrev, R.D. Lazarov, P.S. Vassilevski and L.T. Zikatanov,
        \emph{Two‐level preconditioning of discontinuous Galerkin approximations of second‐order elliptic equations},
        Numer. Linear Algebra Appl., 13(9), pp. 753--770 (2006).

	\bibitem{dohrmann2008domain}
        C.R. Dohrmann, A. Klawonn and O.B. Widlund,
        \emph{Domain decomposition for less regular subdomains: overlapping Schwarz in two dimensions},
        SIAM J. Numer. Anal., 46(4), pp. 2153--2168 (2008).

    \bibitem{dryja2013analysis} 
    	M. Dryja, J. Galvis and M. Sarkis,
    	\emph{A {FETI-DP} preconditioner for a composite finite element and discontinuous {G}alerkin method},
    	SIAM J. Numer. Anal., 51-1, pp. 400--422 (2013).
	
	\bibitem{dryja2015deluxe} 
    	M. Dryja, J. Galvis and M. Sarkis,
    	\emph{A deluxe {FETI-DP} preconditioner for a composite finite element and {DG} method},
    	Comput. Methods Appl. Math., 15-4, pp. 465-482 (2015).

    \bibitem{franzone2014mathematical}
	   P. Colli Franzone, L.F. Pavarino and S. Scacchi,
	   \emph{Mathematical Cardiac Electrophysiology},
	   Springer, vol. 13 (2014).

    \bibitem{fedele2023comprehensive}
        M. Fedele, R. Piersanti, F. Regazzoni, M. Salvador, P.C. Africa, M. Bucelli, A. Zingaro, L. Dede' and A. Quarteroni,
        \emph{A comprehensive and biophysically detailed computational model of the whole human heart electromechanics},
        Comp. Meth. App. Math. Engrg., 410, 115983 (2023).

    \bibitem{heinlein2019adaptive}
        A. Heinlein, A. Klawonn, J. Knepper and O. Rheinbach,
        \emph{Adaptive GDSW coarse spaces for overlapping Schwarz methods in three dimensions},
        SIAM J. Sci. Comput., 41.5, pp. A3045--A3072 (2019).
        
    \bibitem{heinlein2022adaptive}
        A. Heinlein, A. Klawonn, J. Knepper, O. Rheinbach and O. Widlund,
        \emph{Adaptive GDSW Coarse Spaces of Reduced Dimension for Overlapping Schwarz Methods},
        SIAM J. Sci. Comput., 44.3, pp. A1176--A1204 (2022).

    \bibitem{heinlein2020frosch}
        A. Heinlein, A. Klawonn, S. Rajamanickam and O. Rheinbach,
        \emph{FROSch: a fast And robust overlapping Schwarz domain decomposition preconditioner based on Xpetra in Trilinos},
        Domain Decomposition Methods in Science and Engineering XXV (DD 2018),
        Lecture Notes in Computational Science and Engineering, vol 138. Springer, Cham.

    \bibitem{heinlein2023multilevel}
        A. Heinlein , O. Rheinbach and F. R{\"o}ver,
        \emph{A multilevel extension of the GDSW overlapping Schwarz preconditioner in two dimensions},
        Comput. Methods Appl. Math.,  23.4: 953-968 (2023).

    \bibitem{huynh2023convergence}
        N.M.M. Huynh, F.Chegini, L.F. Pavarino, M. Weiser and S.Scacchi,
        \emph{Convergence analysis of BDDC preconditioners for composite DG discretizations of the cardiac cell-by-cell model},
        J. Sci. Comput., 45(6), pp. A2836--A2857 (2023).

    \bibitem{huynh2022parallel}
         N. M. M. Huynh, L. F. Pavarino, S. Scacchi,
        \emph{Parallel {N}ewton--{K}rylov {BDDC} and {FETI-DP} Deluxe Solvers for Implicit Time discretizations of the Cardiac {B}idomain Equations},
        SIAM J. Sci. Comp. 44(2): B224-B249 (2022).

    \bibitem{huynh2021newton}
         N. M. M. Huynh, 
        \emph{Newton-{K}rylov-{BDDC} deluxe solvers for non-symmetric fully implicit time discretizations of the {B}idomain model},
        Numer. Math. 152(4), pp. 841–879 (2022).

    \bibitem{tveito2021bis} 
    	K.H. Jaeger, K. G. Hustad, X. Cai, A. Tveito, et al,
    	\emph{Efficient Numerical Solution of the {EMI} Model Representing the Extracellular Space (E), Cell Membrane (M) and Intracellular Space (I) of a Collection of Cardiac Cells},
    	Front. Physics 8, (2021).
	
	\bibitem{tveito2021tris}
    	K.H. Jaeger, A.G. Edwards, W.R. Giles, A. Tveito. 
    	\emph{From Millimeters to Micrometers; Re-introducing Myocytes in Models of Cardiac Electrophysiology}. Front. Physiol. 12:763584, (2021).

    \bibitem{kanschat2003preconditioning}
        G. Kanschat, 
        \emph{Preconditioning methods for local discontinuous Galerkin discretizations},
        SIAM J. Sci. Comp., 25.3, pp. 815--831 (2003).

    \bibitem{piersanti2021parameter}
        E. Piersanti, J.J. Lee, T. Thompson, K.-A. Mardal and M.E. Rognes,
        \emph{Parameter robust preconditioning by congruence for multiple-network poroelasticity},
        SIAM J. Sci. Comp., 43, pp. B984--B1007 (2021).

    \bibitem{plank2007algebraic}
        G. Plank, M. Liebmann, R. Weber dos Santos, E. Vigmond, G. Haase, 
        \emph{Algebraic multigrid preconditioner for the cardiac bidomain model},
        IEEE Trans. Biomed. Eng. 54(4), pp. 585-96 (2007).

    \bibitem{rosilho2024boundary}
        G. Rosilho de Souza, R. Krause,  and S. Pezzuto, \emph{Boundary integral formulation of the cell-by-cell model of cardiac electrophysiology}, Eng. Anal. Bound. Elem., 158, 239--251 (2024).

    \bibitem{rohr2004}	
    	S. Rohr,
    	\emph{Role of gap junctions in the propagation of the cardiac action potential},
    	Cardiov. Res. 62(2), pp. 309--322 (2004).

    \bibitem{toselli2006domain} 
    	A. Toselli and O.B. Widlund,
    	\emph{Domain decomposition methods},
    	Algorithms and theory. Springer (2005).

    \bibitem{tung1978bidomain} 	
    	L. Tung,
    	\emph{A bidomain model for describing ischemic myocardial d-c potentials},
    	PhD thesis, M.I.T. Cambridge, Mass. (1978).

    \bibitem{tveito2017} 
    	A. Tveito, et al.,
    	\emph{A cell-based framework for numerical modeling of electrical conduction in cardiac tissue},
    	Frontiers in Physics 5:48 (2017).
	
	\bibitem{tveito2021} 
    	A. Tveito, K.-A. Mardal, M.E. Rognes,
    	\emph{Modeling Excitable Tissue - The {EMI} Framework},
    	Simula Springer Briefs in Computing 7 (2021).
     
	\bibitem{veneroni2009reaction} 
    	M. Veneroni,
    	\emph{Reaction–diffusion systems for the macroscopic bidomain model of the cardiac electric field},
    	Nonlinear Anal., Real World Appl. 10(2), pp. 849--868 (2009).

    \bibitem{vigmond2008solvers}
        E. J. Vigmond, R. Weber dos Santos, A. J. Prassl, M. Deo and G. Plank, 
        \emph{Solvers for the cardiac bidomain equations},
        Progress Biophys. Molec. Biol., 96 (1-3), pp. 3--18 (2008).

    \bibitem{weiser_2022} 
    	M. Weiser and F. Chegini,
    	\emph{Adaptive multirate integration of cardiac electrophysiology with spectral deferred correction methods}. In P. Nithiarasu and C. Vergara (eds.), CMBE22 -- 7th International Conference on Computational \& Mathematical Biomedical Engineering., pp. 528--531 (2022). 
    
	
\end{thebibliography}
\end{document}